\nobibliography*\usepackage{amssymb}
\newcommand{\excise}[1]{}
\newcommand \rad {\operatorname{rad}}%
\newcommand \ke {\operatorname{Ker}}
\newcommand \im {\operatorname{Im}}
\newcommand \M {\mathcal{M}}
\newcommand \R {\mathcal{R}}
\newcommand \+ {\texttt{+}}
\newcommand \p {\mathcal{\large{\texttt{.}}}}
\newtheorem{thm}{Theorem}
\newtheorem{lemma}[thm]{Lemma}
\newtheorem{cor}[thm]{Corollary}
\theoremstyle{definition}
\newtheorem{example}[thm]{Example}
\noindent\makebox[0mm][r]{\arabic{enumi}}}
\noindent\makebox[0mm][r]{(\roman{enumi})}}
\newcommand{\Hom}{\operatorname{Hom}}
\def\<{\langle}
\def\>{\rangle}
\def\cocoa
\begin{document}
\mbox{}
\vspace{-4.65ex}
\title[A Homology Theory for the Semimodules of Radical Submodules]{A Homology Theory for the Semimodules of Radical Submodules}
\author[M. Safaeipour]{Mahboubeh Safaeipour}
\address[Mahboubeh Safaeipour]{}
\curraddr{Department of Mathematics\\University of Birjand\\Birjand, Iran.}
\email{mahboubehsafaeipour@birjand.ac.ir}
\author[H. F. Moghimi]{Hosein Fazaeli Moghimi}
\address[Hosein Fazaeli Moghimi]{}
\curraddr{Department of Mathematics\\University of Birjand\\Birjand, Iran.}
\email{hfazaeli@birjand.ac.ir}
\author[F. Rashedi]{Fatemeh Rashedi}
\address[Fatemeh Rashedi]{}
\curraddr{Department of Mathematics\\National University of Skills (NUS)\\Tehran, Iran.}
\email{frashedi@nus.ac.ir}

\subjclass[2010]{18G99, 13C99, 16Y60}
\keywords{Semimodule of radical submodules, Map of complexes of modules, Radical homology, Homotopy, Radical projective semimodule}

\maketitle
\begin{abstract}
Let $R$ be a commutative ring with identity, and let $\R(R)$ denote the semiring of radical ideals of $R$. The radical functor $\R$, from the category of $R$-modules $R{-}\boldsymbol{\sf{Mod}}$ to the category of $\R(R)$-semimodules $\R(R){-}\boldsymbol{\sf{Semod}}$, maps any complex $\M=(M_n, f_n)_{n\geq 0}$ of $R$-modules to a complex $\R(\M)=(\R(M_n), \R(f_n))_{n\geq 0}$ of $\R(R)$-semimodules, where  $\R(M_n)$ consists of radical submodules of $M_n$, and the $\R(R)$-semimodule homomorphisms $\R(f_n):\R(M_n)\rightarrow \R(M_{n-1})$ are defined by $\R(f_n)(N)=\rad(f_n(N))$. The $n$-th radical homology of the complex $(\R(M_n), \R(f_n))_{n\geq 0}$, denoted $H_n(\R(\M))$, consists of radical submodules $N$ of $M_n$ such that $f_n(N)$ is contained in the radical of the zero submodule of $M_{n-1}$, and two such radical submodules are equivalent under the Bourne relation modulo the image of $\R(f_{n+1})$. $H_n(\R(-))$  is regarded as a covariant functor from the category $\boldsymbol{\sf{Ch}}(R{-}\boldsymbol{\sf{Mod}})$ of chain complexes of $R$-modules to $\R(R){-}\boldsymbol{\sf{Semod}}$, which acts identically on any pair of homotopic maps of complexes of $R$-modules. In particular, if $\M$ and $\M'$ are homotopically equivalent, then $H_n(\R(\M))$ and $H_n(\R(\M'))$ are isomorphic $\R(R)$-semimodules. We provide conditions under which $H_n(\R(-))$ induces a long exact sequence of radical homology modules for any short exact sequence of complexes of $R$-modules, and satisfies the naturality condition for exact homology sequences. Finally, we introduce a projective resolution for an $R$-module $M$ based on $\R(R)$-semimodules and give conditions under which such a projective resolution exists and is unique up to a homotopy. 

\end{abstract} 
\section{Introduction and preliminaries}
\indent 
In this paper, all semirings are assumed to be commutative. A commutative semiring $\mathsf{R}$ is defined as a non-empty set equipped with two binary operations,  $``+"$ and $``\cdot"$, where $(\mathsf{R},+,0_\mathsf{R})$ and $(\mathsf{R},\cdot, 1_R)$ form commutative monoids such that 
\begin{center}
$r\cdot(r_1+r_2)=r\cdot r_1+r\cdot r_2$  and  $r\cdot 0_R=0_R$ 
\end{center}
for all $r, r_1, r_2\in \mathsf{R}$. A commutative monoid $(\mathsf{M},+,0_\mathsf{M})$ is an $\mathsf{R}$-\textit{semimodule} if $\mathsf{R}$ acts on $\mathsf{M}$ like a ring on a unital module, and furthermore, for any
 $r\in\mathsf{R}$ and $m\in\mathsf{M}$, it is assumed that $r.m=m.r$ and $r.0_\mathsf{M}=0_\mathsf{R}.m=0_\mathsf{M}$. If no ambiguity arises, we omit the subscripts of $0_\mathsf{R}$, $1_\mathsf{R}$ and $0_\mathsf{M}$ and simply denote them by $0$ and $1$. A non-empty subset $\mathsf{N}$ of an  $\mathsf{R}$-semimodule  $\mathsf{M}$ is an $\mathsf{R}$-subsemimodule of $\mathsf{M}$, if it remains an $\mathsf{R}$-semimodule under the  $``+"$ and $``\cdot"$. For any subsemimodule $\mathsf{N}$ of an $\mathsf{R}$-semimodule $\mathsf{M}$, we mean by the Bourne relation $\mathsf{M}$ modulo $\mathsf{N}$ that $m\equiv_\mathsf{N} m'$ if and only if  $m+n=m'+n'$ for some $n,n'\in N$. The equivalence class of any $m\in M$ with respect to this relation is denoted by $m/\mathsf{N}$, and the collection of these equivalence classes is denoted by $\mathsf{M}/\mathsf{N}$. The addition and scalar multiplication of $\mathsf{R}$ on $\mathsf{M}/\mathsf{N}$ is defined by $m/\mathsf{N}+m'/\mathsf{N}=(m+m')/\mathsf{N}$ and $r\cdot(m/\mathsf{N})=(r\cdot m)/\mathsf{N}$ for all $m,m'\in\mathsf{M}$ and $r\in\mathsf{R}$.\\ \indent
Let $M$ be an $R$-module. A submodule $N$ of $M$ is called a radical submodule if $\operatorname{rad}(N)=N$, where $\operatorname{rad}(N)$, the radical of $N$, is the intersection of all prime submodules of $M$ containing $N$. If no prime submodule contains $N$, we set  $\operatorname{rad}(N)=M$. Also, $M$ is said to be a radical module if its zero submodule is a radical submodule (for more details on radical submodules, see \cite{Alkan,ms} for example).  In the case of ideals, we denote the radical of an ideal $I$ of $R$ by $\sqrt{I}$. Note that the set of radical ideals of $R$, denoted $\R(R)$, is a semiring with the operations $I{\texttt{+}} J=\sqrt{I+J}$ and $I\p J=\sqrt{IJ}$, where $I+J$ and $IJ$ are the usual addition and multiplication of ideals $I$ and $J$, respectively.  Moreover, the set of radical submodules of $M$, denoted $\R(M)$, is an $\R(R)$-semimodule with the operations $N{\texttt{+}} L=\operatorname{rad}(N+L)$ and $I\p N=\operatorname{rad}(IN)$ for $N, L\in \R(M)$ and $I\in\R(R)$, where $N+L$ denotes the usual addition of submodules $N$ and $L$ and $IN$ consists of all finite summs $\sum{a_{\lambda}n_{\lambda}}$ for $a_{\lambda}\in I$ and $n_{\lambda}\in N$) \cite[Theorem 2]{smr}. \\ \indent
Roughly, the concept of a semiring (or semimodule) homomorphism parallels the definition of ring (or module) homomorphisms. For two $R$-semimodules $\mathsf{M}$ and $\mathsf{M'}$, the notation $\operatorname{Hom}(\mathsf{M},\mathsf{M'})$ represents the collection of  $\mathsf{R}$-semimodule homomorphisms from $\mathsf{M}$ to $\mathsf{M'}$, which naturally forms an $\mathsf{R}$-semimodule. In \cite[Theorem 4]{smr}, we showed that, $\R$ is a covariant  functor from $R{-}\boldsymbol{\sf{Mod}}$  (the category of $R$-modules)  to $\R(R){-}\boldsymbol{\sf{Semod}}$ (the category of $\R(R)$-semimodules) which maps any $R$-module homomorphism $f:M\rightarrow M'$ to the $\R(R)$-semimodule homomorphism $\R(f):\R(M)\rightarrow \R(M')$ defined as $\R(f)(N)=\rad(f(N)$ for any radical submodule $N$ of $M$. In particular, by considering an exactness for semimodule sequences similar to that of modules over commutative rings, $\R$ is an exact functor \cite[Lemma 16]{smr}. We further showed that for any two $R$-modules $P$ and $E$, $\Hom_{\R(R)}(R(P),R(-))$ and $\Hom_{\R(R)}(R(-),R(E))$ are left exact on any short exact sequence $0\rightarrow M' \xrightarrow{f} M\xrightarrow{g} M''\rightarrow 0$, where $M''$ is radical. Subsequently, in \cite[Theorem 8]{smr1}, we investigated the conditions under which the natural tensor functor $\R(-)\otimes_{\R(R)} \R(T)$ (for an $R$-module $T$) preserves module exact sequences, where the approach to tensor products for semimodules over commutative semirings parallels that for modules over commutative rings. In this article, we extend this investigation toward developing a homology theory for semimodules of radical submodules. A  \textit{complex} $\mathcal{M}=(\mathsf{M}_n, f_n)_{n\geq0}$ of $\mathsf{R}$-semimodules is a sequence $\{\mathsf{M}_n\}_{n\geq0}$ of $\mathsf{R}$-semimodules and a sequence $\{f_n\}_{n\geq0}$ of $\mathsf{R}$-semimodule homomorphisms $f_n:\mathsf{M}_n\rightarrow \mathsf{M}_{n-1}$ such that $f_n f_{n+1}=0$ $(n\geq 0)$. We denote such a complex  by the symbol $$\mathcal{M}:\cdots\rightarrow \mathsf{M}_n\xrightarrow{f_n}\mathsf{M}_{n-1}\xrightarrow{f_{n-1}} \mathsf{M}_{n-2}\xrightarrow{f_{n-2}}\cdots\xrightarrow{f_{2}}\mathsf{M}_1\xrightarrow{f_1}\mathsf{M}_0\rightarrow 0.$$ Let $R$ be a commutative ring with identity. Given a complex $\mathcal{M}=(M_n, f_n)_{n\geq0}$ of $R$-modules, $\R(\mathcal{M})=(\{\R(M_n)\},\{\R(f_n)\})_{n\geq0}$ is a complex of $\R(R)$-semimodules. Let  $\mathcal{M}=(M_n, f_n)_{n\geq0}$ and $\M'=(M'_n, f'_n)_{n\geq0}$ be two complexes of $R$-modules. A homomorphism  $\phi: \mathcal{M}\rightarrow\mathcal{M'}$ of complexes is a collection $\{\phi_n\}_{n\geq 0}$ of $R$-module homomorphisms $\phi_n: {M}_n\rightarrow{M}'_n$ satisfying $ f'_n\phi_n=\phi_{n-1}f_n$ for all $n\geq 1$. Thus if $\phi$ is such a homomorphism, by the functoriality of $\R$, the following diagram is commutative for all $n\geq1$:
\begin{displaymath}
\xymatrix{
\R(M_n) \ar[r]^{\R(\phi_n)} \ar[d]_{\R(f_n)} & \R(M'_n)\ar[d]^{\R(f'_n)}\\
\R(M_{n-1})\ar[r]_{\R(\phi_{n-1})} &\R(M'_{n-1}) 
}
\end{displaymath} 
\vspace{-0.7 cm}
\[{ \operatorname{Diagram}\ 1}\]
 Considering $Z_n(\R(\M)):=\ke \R(f_n)$ as the \textit{$n$-cycles} of $\R(M)$ and $ B_n(\R(\M)):=\im(\R( f_{n+1})$ as the \textit{$n$-boundareis} of $\R(\M)$, we have $B_n(\R(\M))\subseteq Z_n(\R(\M))$ for all $n\geq0$. Now by the \textit{$n$-th radical homology module}, we mean the qoutiont $\R(R)$-semimodule $H_n(\R(\M)):=Z_n(\R(\M))/B_n(\R(\M))$ ( the Bourne classes of $n$-cycles modulo the subsemimodule $B_n(\R(\M))$ of $\R(M_n)$). A complex $\R(\M)$ is said to be \textit{acyclic} if $H_n(\R(\M))=0$ for all $n\geq 0$. It is shown $H_n(\R(-))$ is a covariant functor from the $\boldsymbol{\sf{Ch}}(R{-}\boldsymbol{\sf{Mod}})$ (the category of chain complexes of $R$-modules) to $\R(R){-}\boldsymbol{\sf{Semod}}$ (Theorem \ref{RH}). It is provided conditions under which any short exact sequence $0\rightarrow\R(\M')\xrightarrow{\R(\phi)} \R(\M)\xrightarrow{\R(\psi)} \R(\M'')\rightarrow0$ of complexes of $\R(R)$-semimodules induces a long exact sequence: $$\cdots\rightarrow H_n(\R(\M'))\xrightarrow {H_n(\R(\phi)}H_n( \R(\M)) \xrightarrow {H_n(\R(\psi))}H_n( \R(\M''))\xrightarrow{\alpha_n} H_{n-1}(\R(\M'))\rightarrow\cdots$$ of $\R(R)$-semimodules (Theorem \ref{snake}). In particular, in this case, if any two of $\R(\M')$, $\R(\M)$, and $\R(\M'')$ are acyclic, then the third is also cyclic (Corollary \ref{cyclic}). Let $\mathcal{M}=(\mathsf{M}_n, f_n)_{n\geq0}$ and $\M'=(\mathsf{M}'_n, f'_n)_{n\geq0}$ be two complexes of $\mathsf{R}$-semimodules and  $\mathsf{R}$-semimodule homomorphisms, and $\phi, \psi:\M\rightarrow \M'$ are two maps of complexes. A \textit{homotopy} between $\phi$ and $\psi$ is a pair $(s,t)$, where $s=\{s_n:\mathsf{M}_n\rightarrow\mathsf{M'}_n\}_{n\geq 0}$ and $t= \{t_n:\mathsf{M}_n\rightarrow\mathsf{M'}_n\}_{n\geq 0}$ are two sequences of  $\mathsf{R}$-semimodule homomorphisms, such that $\phi_n+s_{n-1}f_n+f'_{n+1}s_n=\psi_n+t_{n-1}f_{n}+f'_{n+1}t_n$ for all $n\geq 0$. In this case, we write $ \phi\overset{(s, t)}{\simeq}\psi$ to denote homotopy and say $\phi$ and $\psi$ are \textit{homotopic}. Note that, by letting  $\M$ and $\M'$ as chain complexes of $R$-modules and considering $t_n=0$ for all $n\geq 1$ in the last equality, we find that $\phi$ and $\psi$ are homotopic as the maps of complexes of $R$-modules, and write $ \phi\overset{s}{\simeq}\psi$.  In this setting, we get that ${\R(\phi){\overset{(\R(s), \R(0))}{\simeq}}\R(\psi)}$ and  $H_n(\R(\phi))=H_n(\R(\psi))$ (Theorem \ref{Rhomo}). In particular, if $\M$ and $\M'$ are homotopically equivalent as $R$-modules complexes,  then for any  $n\geq0$, the $\R(R)$-semimodules $H_n(\R(\M))$ and $H_n(\R(\M'))$ are isomorphic (Corollary \ref{he}). We say that an $R$-module $P$ is radical projective if $\R(P)$ is a projective $\R(R)$-semimodule, meaning $\operatorname{Hom}(\R(P),-)$ preserves any short exact sequence of $\R(R)$-semimodules. A \textit{radical projective resolution} of an $R$-module $M$ is a complex $\mathcal{P}:=(P_n,f_n)_{n\geq 0}$ of radical projective $R$-modules $P_n$ and $R$-module homomorphisms $f_n$, with an $R$-module homomorphism $g:P_0\rightarrow M$, such that the sequence \[\cdots\rightarrow\R(P_n)\xrightarrow{\R(f_n)}\R(P_{n-1})\xrightarrow{\R(f_{n-1})} \R(P_{n-2})\cdots\rightarrow \R(P_{1})\xrightarrow{\R(f_1)}\R(P_0)\xrightarrow{\R(g)}\R(M)\rightarrow0\] is an exact sequence of $\R(R)$-semimodules. It is shown that any $R$-module homomorphism $g: M\rightarrow M'$ uniquely lifts to a map $\phi: \R(\mathcal{P})\rightarrow \R(\mathcal{P'})$ of complexes of projective $\R(R)$-semimodules, up to homotopy (Theorem \ref{exist}).
 
\section{Homology of semimodules of radical submodules}
Let us begin the section by showing that the $n$-th homology $H_n$ can be regarded as a covariant functor from $\boldsymbol{\sf{Ch}}(R{-}\boldsymbol{\sf{Mod}})$ to $\R(R){-}\boldsymbol{\sf{Semod}}$. 
\begin{thm}\label{RH}
Let  $\M=(M_n,f_n)_{n\geq0}$ and $\M'=(M'_n,f'_n)_{n\geq0}$ be complexes of $R$-modules and $\phi:\M\rightarrow \M':=\lbrace\phi_n:M_n\rightarrow M'_n\rbrace_{n\geq 0}$ be a mapping of complexes of $R$-modules. Then $\phi$ induces a sequence \[\{H_n(\R(\phi)):H_n(\R(\M))\rightarrow H_n(\R(\M'))\}_{n\geq 0}\] of $\R(R)$-semimodule homomorphisms such that for each $N\in \R(M_n)$, \[H_n(\R(\phi))(N/B_n(\R(M))=\R(\phi_n)(N)/B_{n}(\R(M'))=\rad(\phi_n(N))/B_{n}(\R(M)).\] Moreover 
\begin{itemize}
\item[(1)] For the identity map $I_{\M}:\M\rightarrow \M$ of complexes of $R$-modules,  we have $H_n(\R(I_{\M}))=I_{H_n(\R(\M))}$ for all $n\geq 0$, where $I_{H_n(\R(\M))}$ is the identity map on $H_n(\R(\M))$. \\
\item[(2)] For any two maps $\phi:\M\rightarrow\M'$ and $\psi:\M'\rightarrow\M''$ of complexes of $R$-modules, $H_n(\R(\psi))H_n(\R(\phi))=H_n(\R(\psi\phi))$ for all $n\geq 0$.
\end{itemize}
\end{thm}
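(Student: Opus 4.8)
The plan is to verify, in order, three things: that each $H_n(\R(\phi))$ is a well-defined map on Bourne-equivalence classes, that it is an $\R(R)$-semimodule homomorphism, and that the assignment $\phi \mapsto \{H_n(\R(\phi))\}$ is functorial, i.e.\ satisfies (1) and (2). The definition on representatives is forced: send $N/B_n(\R(\M))$ to $\R(\phi_n)(N)/B_n(\R(\M'))=\rad(\phi_n(N))/B_n(\R(\M'))$. First I would check that this lands in $H_n(\R(\M'))$ at all: if $N\in Z_n(\R(\M))$, that is $\R(f_n)(N)=\rad(f_n(N))\subseteq \rad(0_{M_{n-1}})$, then by the commutativity of Diagram~1 we have $\R(f'_n)(\R(\phi_n)(N))=\R(\phi_{n-1})(\R(f_n)(N))=\R(\phi_{n-1})(\rad(f_n(N)))$, and since $\R(f_n)(N)$ is a subsemimodule contained in the zero of $\R(M_{n-1})$ — more precisely $f_n(N)\subseteq \rad(0)$ — applying $\R(\phi_{n-1})$ and then $\R(f'_n)$ keeps us inside $\rad(0_{M'_{n-1}})$, so $\R(\phi_n)(N)\in Z_n(\R(\M'))$. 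This uses only that $\R$ is a functor and the commutativity already recorded in the excerpt.

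Next I would verify that the map respects the Bourne relation. Suppose $N/B_n(\R(\M))=N'/B_n(\R(\M))$, meaning $N\plus B=N'\plus B'$ for some $B,B'\in B_n(\R(\M))=\im\R(f_{n+1})$, say $B=\rad(f_{n+1}(L))$, $B'=\rad(f_{n+1}(L'))$ with $L,L'\in\R(M_{n+1})$. Applying the semimodule homomorphism $\R(\phi_n)$ to the equation $\rad(N+B)=\rad(N'+B')$ gives $\R(\phi_n)(N)\plus\R(\phi_n)(B)=\R(\phi_n)(N')\plus\R(\phi_n)(B')$; and by Diagram~1 (in degree $n+1$) $\R(\phi_n)(B)=\R(\phi_n)(\R(f_{n+1})(L))=\R(f'_{n+1})(\R(\phi_{n+1})(L))\in B_n(\R(\M'))$, and likewise for $B'$. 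Hence $\R(\phi_n)(N)/B_n(\R(\M'))=\R(\phi_n)(N')/B_n(\R(\M'))$, so $H_n(\R(\phi))$ is well defined. That it is additive and respects the $\R(R)$-action is then immediate: $\R(\phi_n)$ is a semimodule homomorphism and addition/scalar multiplication on the quotient are computed on representatives, so $H_n(\R(\phi))((N\plus N')/B_n)=\R(\phi_n)(N\plus N')/B_n=(\R(\phi_n)(N)\plus\R(\phi_n)(N'))/B_n$ and similarly $H_n(\R(\phi))((I\p N)/B_n)=(I\p\R(\phi_n)(N))/B_n$ for $I\in\R(R)$.

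Finally, (1) and (2) follow by a one-line computation on representatives using that $\R$ is a covariant functor (Theorem~4 of \cite{smr}): $\R(I_{M_n})=I_{\R(M_n)}$ gives $H_n(\R(I_\M))(N/B_n)=\R(I_{M_n})(N)/B_n=N/B_n$, and $\R(\psi_n\phi_n)=\R(\psi_n)\R(\phi_n)$ gives $H_n(\R(\psi\phi))(N/B_n)=\rad(\psi_n(\phi_n(N)))/B_n=H_n(\R(\psi))\bigl(\rad(\phi_n(N))/B_n\bigr)=H_n(\R(\psi))H_n(\R(\phi))(N/B_n)$, once one notes the target boundary subsemimodule is the same on both sides. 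I expect no serious obstacle; the only point demanding care is the first one — confirming that the image of an $n$-cycle is an $n$-cycle, where one must be slightly careful about what "contained in the radical of the zero submodule" means for subsemimodules versus individual elements, and that $\R(\phi_{n-1})$ sends $\rad(0_{M_{n-1}})$-bounded submodules to $\rad(0_{M'_{n-1}})$-bounded ones. Everything else is bookkeeping with the Bourne relation and the functoriality of $\R$ already established in \cite{smr}.
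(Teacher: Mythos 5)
Your proposal is correct and follows essentially the same route as the paper's proof: cycles map to cycles via the commutativity of Diagram~1 and the functoriality of $\R$, well-definedness via the Bourne relation together with the fact that $\R(\phi_n)$ carries $B_n(\R(\M))$ into $B_n(\R(\M'))$, and (1)--(2) by functoriality on representatives. You are in fact somewhat more explicit than the paper (which leaves the homomorphism property and (1)--(2) as ``straightforward''), but there is no substantive difference in approach.
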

\begin{proof}
Letting the map $\phi:=\lbrace\phi_n:M_n\rightarrow M'_n\rbrace_{n\geq 0}$ of complexes of $R$-modules from $\M=(M_n,f_n)_{n\geq0}$ to  $\M'=(M'_n,f'_n)_{n\geq0}$ and considering the commutativity of $\operatorname{Diagram} 1$, by the functoriality of $\R$ we have \[\R(f'_n)\R(\phi_n)(Z_n(\R(\M)))=\R(\phi_{n-1})\R(f_n)(Z_n(\R(\M)))=0\] wich shows that $\R(\phi_n)(Z_n(\R(\M)))\subseteq Z_n(\R(\M'))$. 
Hence $\R(\phi_n)$ defines the sequence  $\{H_n(\R(\phi)):H_n(\R(\M))\rightarrow H_n(\R(\M'))\}_{n\geq 0}$ of $\R(R)$-semimodule homomorphisms such that for each $N\in Z_n(\R(\M))$, $H_n(\R(\phi))(N/B_n(\R(M))=\R(\phi_n)(N)/B_{n-1}(\R(M))$. To complete the proof of well-definedness of $\{H_n(\R(\phi))\}$, we assume that $N_1/B_n(\R(\M))=N_2/B_n(\R(\M))$ for $N_1, N_2\in Z_n(\R(\M))$. Then, there exist $L_1,L_2\in B_n(\R(\M))$ such that $ \R(\phi_n)(L_1\+N_1)=\R(\phi_n)(L_2\+N_2)$. Now, since $\R(\phi_n)(L_i)\in B_n(\R(\M'))$, we get that $\R(\phi_n)(N_1)/B_n(\R(\M'))=\R(\phi_n)(N_2)/B_n(\R(\M')) $, as required. Finally, verifying that properties (1) and (2) hold is straightforward.
\end{proof}
Let $\mathcal{M}=(\mathsf{M}_n, f_n)_{n\geq0}$,  $\M'=(\mathsf{M}'_n, f'_n)_{n\geq0}$ and $\M''=(\mathsf{M}''_n, f''_n)_{n\geq0}$ be complexes of $\mathsf{R}$-semimodules. Let $\phi=\{\phi_n: \mathsf{M}'_n\rightarrow \mathsf{M}_n\}$ and $\psi=\{\psi_n: \mathsf{M}_n\rightarrow\mathsf{M}''_n\}$ be two maps of complexes. We say that the sequence \[0\rightarrow\M'\xrightarrow{\phi} \M\xrightarrow{\psi} \M''\rightarrow0\] is an exact sequence of complexes if the sequences \[0\rightarrow \mathsf{M'_n}\xrightarrow{\phi_n}\mathsf{M_n}\xrightarrow{\psi_n} \mathsf{M''_n}\rightarrow0\] are exact sequences of $\mathsf{R}$-semimodules for all $n\geq 0$. Using these notations, we establish the following result:
\begin{thm} \label{snake}
Let $\mathcal{M}=(M_n, f_n)_{n\geq0}$,  $\M'=(M'_n, f'_n)_{n\geq0}$ and $\M''=(M''_n, f''_n)_{n\geq0}$ be complexes of $R$-modules and  $\phi=\{\phi_n: M'_n\rightarrow M_n\}$ and $\psi=\{\psi_n:M_n\rightarrow M''_n\}$ be two maps of complexes. We assume that $0\rightarrow\R(\M')\xrightarrow{\R(\phi)} \R(\M)\xrightarrow{\R(\psi)} \R(\M'')\rightarrow0$ is an exact sequence of complexes of $\R(R)$-semimodules such that for any positive integer $n$,  we have $\im\R(\phi_n)\subseteq \im\R(f_{n+1})$, $\im\R(f''_{n})$ is subtractive and the $\R(R)$-semimodule homomorphism $\R(\psi_n)$ is steady. Then there exists a connecting homomorphism $\alpha_n: H_n(\R(\M''))\rightarrow H_{n-1}(\R(\M'))$ such that the sequence $$\cdots\rightarrow H_n(\R(\M'))\xrightarrow {H_n(\R(\phi))}H_n( \R(\M)) \xrightarrow {H_n(\R(\psi))}H_n( \R(\M''))\xrightarrow{\alpha_n} H_{n-1}(\R(\M'))\rightarrow\cdots$$ is exact.
\end{thm}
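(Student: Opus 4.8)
The plan is to run the classical zig-zag (snake-lemma) construction of the connecting homomorphism, replacing each cancellation of the module argument by a Bourne-relation manipulation licensed by the three standing hypotheses. To define $\alpha_n$, take a cycle $N''\in Z_n(\R(\M''))$; since $\R(\psi_n)$ is surjective, pick $N\in\R(M_n)$ with $\R(\psi_n)(N)=N''$. Functoriality of $\R$ gives $\R(f''_n)\R(\psi_n)=\R(\psi_{n-1})\R(f_n)$, so $\R(\psi_{n-1})(\R(f_n)(N))=\R(f''_n)(N'')=0$, i.e.\ $\R(f_n)(N)\in\ke\R(\psi_{n-1})=\im\R(\phi_{n-1})$; injectivity of $\R(\phi_{n-1})$ yields a unique $N'\in\R(M'_{n-1})$ with $\R(\phi_{n-1})(N')=\R(f_n)(N)$. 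Applying $\R(\phi_{n-2})$ to $\R(f'_{n-1})(N')$ and using $\R(\phi_{n-2})\R(f'_{n-1})=\R(f_{n-1})\R(\phi_{n-1})$, $\R(f_{n-1})\R(f_n)=0$, together with injectivity of $\R(\phi_{n-2})$, gives $\R(f'_{n-1})(N')=0$; hence $N'\in Z_{n-1}(\R(\M'))$, and we set $\alpha_n(N''/B_n(\R(\M'')))=N'/B_{n-1}(\R(\M'))$. (In the lowest degree, $Z_0(\R(\M'))=\R(M'_0)$, so the pull-backs there are automatically $0$-cycles.)

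Well-definedness would be checked in two steps. First, if $N_1,N_2$ are two lifts of the same $N''$, then $\R(\psi_n)(N_1)=\R(\psi_n)(N_2)$, and steadiness of $\R(\psi_n)$ upgrades this to a Bourne equivalence modulo $\ke\R(\psi_n)=\im\R(\phi_n)$, say $N_1+P_1=N_2+P_2$ with $P_i\in\im\R(\phi_n)$; by the hypothesis $\im\R(\phi_n)\subseteq\im\R(f_{n+1})$ one has $P_i=\R(f_{n+1})(Q_i)$, so $\R(f_n)(P_i)=(\R(f_n)\R(f_{n+1}))(Q_i)=0$, and applying $\R(f_n)$ to the displayed equality gives $\R(f_n)(N_1)=\R(f_n)(N_2)$, hence the same $N'$. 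Second, if $N''_1/B_n(\R(\M''))=N''_2/B_n(\R(\M''))$, then subtractivity of $\im\R(f''_{n+1})=B_n(\R(\M''))$, together with surjectivity of $\R(\psi_{n+1})$, lets one reduce, after lifting boundary terms through $\R(\psi_{n+1})$ and then through $\R(f_{n+1})$, to the previous case applied to two lifts of one common element; the auxiliary $\R(f_{n+1})$-terms are again annihilated by $\R(f_n)$. That $\alpha_n$ is additive and $\R(R)$-linear then follows by choosing the lift of a sum (resp.\ scalar multiple) of cycles to be the corresponding sum (resp.\ scalar multiple) of the chosen lifts.

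Exactness is a sequence of local diagram chases. At $H_n(\R(\M))$: $H_n(\R(\psi))H_n(\R(\phi))=0$ since $\R(\psi_n)\R(\phi_n)=0$; conversely, if $N\in Z_n(\R(\M))$ and $\R(\psi_n)(N)/B_n(\R(\M''))=0$, then subtractivity of $\im\R(f''_{n+1})$ gives $\R(\psi_n)(N)\in\im\R(f''_{n+1})$, and lifting through $\R(\psi_{n+1})$ and applying steadiness of $\R(\psi_n)$ produces $N+P_1=\R(f_{n+1})(K)+P_2$ with $P_i\in\im\R(\phi_n)\subseteq B_n(\R(\M))$, so (since $\R(f_{n+1})(K)\in B_n(\R(\M))$ too) $N\equiv_{B_n(\R(\M))}0$. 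At $H_n(\R(\M''))$: a cycle $N$ is its own lift with $\R(f_n)(N)=0$, whence $\alpha_n H_n(\R(\psi))=0$; conversely, from $\alpha_n(N''/B_n(\R(\M'')))=0$, i.e.\ $N'+\R(f'_n)(W'_1)=\R(f'_n)(W'_2)$, applying $\R(\phi_{n-1})$ and using $\R(\phi_{n-1})\R(f'_n)=\R(f_n)\R(\phi_n)$ and $\R(\phi_{n-1})(N')=\R(f_n)(N)$ gives $\R(f_n)(N+\R(\phi_n)(W'_1))=\R(f_n)(\R(\phi_n)(W'_2))$; but $\R(\phi_n)(W'_2)\in\im\R(\phi_n)\subseteq\im\R(f_{n+1})$, so its $\R(f_n)$-image is $0$, hence $A:=N+\R(\phi_n)(W'_1)$ is a cycle in $\R(M_n)$ with $\R(\psi_n)(A)=N''$, giving $N''/B_n(\R(\M''))=H_n(\R(\psi))(A/B_n(\R(\M)))$. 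At $H_{n-1}(\R(\M'))$ (for $n\ge 2$): $H_{n-1}(\R(\phi))\alpha_n=0$ because $\R(\phi_{n-1})(N')=\R(f_n)(N)\in B_{n-1}(\R(\M))$; conversely, given a cycle $N'\in Z_{n-1}(\R(\M'))$, the hypothesis $\im\R(\phi_{n-1})\subseteq\im\R(f_n)$ lets us write $\R(\phi_{n-1})(N')=\R(f_n)(\widehat{N})$, and then $N'':=\R(\psi_n)(\widehat{N})$ is a cycle, since $\R(f''_n)(N'')=\R(\psi_{n-1})\R(\phi_{n-1})(N')=0$, with $\alpha_n(N''/B_n(\R(\M'')))=N'/B_{n-1}(\R(\M'))$ by construction. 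The low-degree end of the sequence is treated by the same method.

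The heart of the matter is precisely the handful of steps where the module proof would subtract an element of a kernel or of a boundary subsemimodule: the semimodules $\R(M_n)$ are additively idempotent (indeed $N+N=\rad(N+N)=\rad(N)=N$), so no such cancellation is available, and one must instead keep the error terms explicit and destroy them either by applying a differential---this is what $\im\R(\phi_n)\subseteq\im\R(f_{n+1})$ together with $\R(f_n)\R(f_{n+1})=0$ is for---or absorb them into the Bourne relation, which is what subtractivity of the boundary subsemimodules of $\R(\M'')$ and steadiness of the maps $\R(\psi_n)$ provide. Arranging these three hypotheses so that they simultaneously cover the well-definedness of $\alpha_n$ and exactness at $H_n(\R(\M''))$---the two places that classically rely on a kernel subtraction---is the delicate bookkeeping; once $\alpha_n$ is honestly in place, the remaining inclusions are the short chases indicated above.
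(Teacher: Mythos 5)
Your proposal is correct in substance, but it takes a genuinely different route from the paper. The paper does not build $\alpha_n$ by hand: it forms the induced commutative diagram whose top row is $\R(M'_n)/B_n(\R(\M'))\rightarrow\R(M_n)/B_n(\R(\M))\rightarrow\R(M''_n)/B_n(\R(\M''))\rightarrow 0$ and whose bottom row is $0\rightarrow Z_{n-1}(\R(\M'))\rightarrow Z_{n-1}(\R(\M))\rightarrow Z_{n-1}(\R(\M''))$, with vertical maps induced by the differentials; it proves exactness of these two rows (this is where it spends subtractivity of $\im\R(f''_{n+1})$, steadiness of $\R(\psi_n)$, and $\im\R(\phi_n)\subseteq\im\R(f_{n+1})$), identifies $\ke$ and $\operatorname{coker}$ of the vertical maps with $H_n$ and $H_{n-1}$, and then invokes the semimodule snake lemma \cite[Theorem 3.10]{jawad} to produce the connecting homomorphism and the long exact sequence. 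You instead carry out the full zig-zag construction and all three exactness chases directly, which makes the argument self-contained at the price of redoing what the citation supplies; reassuringly, you spend the three hypotheses in exactly the places the paper does (steadiness to upgrade equal $\R(\psi_n)$-images to Bourne equivalence modulo $\ke\R(\psi_n)=\im\R(\phi_n)$, the containment $\im\R(\phi_n)\subseteq\im\R(f_{n+1})$ so that error terms are annihilated by $\R(f_n)$, and subtractivity to pull $\R(\psi_n)(N)$ into $B_n(\R(\M''))$), and your chase makes explicit a point the paper leaves implicit, namely that the hypothesis $\im\R(\phi_n)\subseteq\im\R(f_{n+1})$ forces $H_n(\R(\phi))=0$, so exactness amounts to $H_n(\R(\psi))$ having trivial kernel and $\alpha_n$ being surjective.

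One caveat to make explicit: when you take \emph{a unique} $N'$ with $\R(\phi_{n-1})(N')=\R(f_n)(N)$, and later conclude that two lifts give \emph{the same} $N'$, you are using genuine injectivity of $\R(\phi_{n-1})$, whereas exactness of $0\rightarrow\R(M'_{n-1})\rightarrow\R(M_{n-1})$ in the semimodule sense a priori only yields $\ke\R(\phi_{n-1})=0$, which for semimodules is strictly weaker than injectivity. Either read the short-exactness hypothesis as including injectivity of the maps $\R(\phi_n)$ (as the paper implicitly does, and as its appeal to \cite[Theorem 3.10]{jawad} presupposes), or note that zero kernel plus steadiness of $\R(\phi_{n-1})$ would restore it; with that reading fixed, your well-definedness and exactness arguments go through.
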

\begin{proof}
Since the given sequence of complexes is exact, we have a commutative diagram 
\begin{displaymath}
\xymatrix{
0\ar[r]&\R(M'_{n}) \ar[r]^{\R(\phi_n)} \ar[d]_{\R(f'_n)}& \R(M_n)\ar[d]_{\R(f_{n})} \ar[r]^{\R(\psi_n)}&\R(M''_{n})   \ar[d]_{\R(f''_{n})}\ar[r]&0\\0\ar[r]&
\R(M'_{n-1}) \ar[r]_{\R(\phi_{n-1})} & \R(M_{n-1}) \ar[r]_{\R(\psi_{n-1})}&\R(M''_{n-1})\ar[r]&0
}
\end{displaymath}
with exact rows. This induces a commutative diagram \begin{displaymath}
\xymatrix{
&\R(M'_{n})/B_n(\R(\M')) \ar[r]^{\widehat{\R(\phi_n)}} \ar[d]_{\overline{\R(f'_n)}}& \R(M_n)/B_n({\R(\M)})\ar[d]_{\overline{\R(f_n)}} \ar[r]^{\widehat{\R(\psi_n)}}&\R(M''_{n})/B_n\R(\M''))   \ar[d]_{\overline{\R(f''_n)}}\ar[r]&0\\0\ar[r]&
Z_{n-1}(\R(\M')) \ar[r]_{\overline{\R(\phi_{n-1})}} & Z_{n-1}(\R(\M)) \ar[r]_{\overline{\R(\psi_{n-1})}}&Z_{n-1}(\R(\M''))
}
\end{displaymath} 
where $\widehat{\R(\phi_n)}:\R(M'_n)/B_n(\R(\M'))\rightarrow\R(M_n)/B_n(\R(\M))$ is defined by  \[\widehat{\R(\phi_n)}(N'_n/B_n(\R(\M')))=\R(\phi_n)(N'_n)/B_n(\R(\M)).\] Since $ \R(\phi_n)\R(f'_{n+1})=\R(f_{n+1})\R(\phi_{n+1})$, the map $\widehat{\R(\phi_n)}$ is well-defined. Similarly, $\widehat{\R(\psi_n)}$ is defined in the same manner. We proceed to show that the rows of diagram are exact. For the first row, we observe that \[(\widehat{\R(\psi_n)}\widehat{\R(\phi_n)})(N'_n/B_n(\R(\M')))=(\R(\psi_n)((\R(\phi_n)(N'_n))/B_n(\R(\M''))=0\]
for every $N'_n\in\R(M'_n)$. Thus, $\im\widehat{\R(\phi_n)}\subseteq \ke\widehat{\R(\psi_n)}$. For the reverse inclusion, let $N_n/B_n(\R(\M))\in \ke\widehat{\R(\psi_n)}$. Then $\R(\psi_n)(N_n)/B_n(\R(\M''))=\widehat{\R(\psi_n)}(N_n/B_n(\R(\M)))=0$, and so there exist $K''_1, K''_2\in B_n(\R(\M''))$ such that $\R(\psi_n)(N_n)\+K''_1=K''_2$. Since $\im\R(f''_{n+1})$ is subtractive, we have $\R(\psi_n)(N_n)\in B_n(\R(\M''))$. Therefore, there exists $N''_{n+1}\in \R(M''_{n+1})$ such that $\R(\psi_n)(N_n)= \R(f''_{n+1})(N''_{n+1})$, and since $\R(\psi_{n+1})$ is surjective, there exists $N_{n+1} \in \R(M_{n+1})$  such that $\R(\psi_{n+1})(N_{n+1})=N''_{n+1}$. Thus   $\R(\psi_n)(N_n)=\R(f''_{n+1})\R(\psi_{n+1})(N_{n+1}) =\R(\psi_n)\R(f_{n+1})(N_{n+1})$. Since $\R(\psi_n)$ is steady, there are $K_1, K_2\in \ke\R(\psi_n)=\im\R(\phi_n)$ so that $N_n\+K_1=\R(f_{n+1})(N_{n+1})\+K_2$. In particular, we have  \[N_n\+K_1\+\rad (f_{n+1})(M_{n+1})=\R(f_{n+1})(N_{n+1})\+K_2\+ \rad(f_{n+1})(M_{n+1}).\] Now,  since $\im\R(\phi_n)\subseteq \im\R(f_{n+1})=B_n(\R(\M))$,  there exists $N'_n\in\R(M'_n)$ such that $\R(\phi_n)(N'_n)=K_2$, and so we get \[N_n/B_n(\R(\M)=\R(\phi_n)(N'_n)/B_n(\R(\M))=\widehat{\R(\phi_n)}(N'_n/B_n(\R(\M'))),\] which implies $N_n/B_n(\R(\M))\in \im\widehat{\R(\phi_n)}$. Thus, $\ke\widehat{\R(\psi_n)}=\im\widehat{\R(\phi_n)}$, establishing the exactness of the first row. To show the exactness of the second row, we define $\overline{\R(\phi_{n-1})}: Z_{n-1}(\R(\M'))\rightarrow Z_{n-1}(\R(\M))$ by $\overline{\R(\phi_{n-1})}(N'_{n-1})=\R(\phi_{n-1})(N'_{n-1})$ for every $N'_{n-1}\in\R(M'_{n-1})$. Now, since $\R(\phi_{n-2})\R(f'_{n-1})=\R(f_{n-1})\R(\phi_{n-1})$, $\overline{\R(\phi_{n-1})}$ is well-defined. Similarly, define $\overline{\R(\psi_{n-1})}: Z_{n-1}(\R(\M))\rightarrow Z_{n-1}(\R(\M''))$. Clearly $\im\overline{\R(\phi_{n-1})}\subseteq \ke\overline{\R(\psi_{n-1})}$. On the other hand, if $N_{n-1}\in \ke\overline{\R(\psi_{n-1})}$, then $\R(\psi_{n-1})(N_{n-1})=0$ and so $N_{n-1}\in \ke\R(\psi_{n-1})=\im\R(\phi_{n-1})$. Therefore there exists $N'_{n-1}\in \R(M'_{n-1})$ such that $N_{n-1}=\R(\phi_{n-1})(N'_{n-1})$ and $ N_{n-1}\in Z_{n-1}(\R(\M))$. Since \[\R(\phi_{n-2})\R(f'_{n-1})(N'_{n-1})=\R(f_{n-1})\R(\phi_{n-1})(N'_{n-1})=\R(f_{n-1})(N_{n-1})=0\] and $\ke\R(\phi_{n-1})={0}$ we get that $N'_{n-1}\in Z_{n-1}(\R(\M'))$, and so  $N_{n-1}=\R(\phi_{n-1})(N'_{n-1})=\overline{\R(\phi_{n-1})}(N'_{n-1})\in \im\overline{\R(\phi_{n-1})}$. In the diagram, the vertical maps are induced by the boundary operators, i.e.,  $\overline{\R(f_n)}:\R(M_n)/B_n(\R(\M))\rightarrow Z_{n-1}(\R(\M))$ is given by \[\overline{\R(f_n)}(N_n/B_n(\R(\M)))=\R(f_n)(N_n)\in \im\R(f_n)\subseteq \ke\R(f_{n-1})=Z_{n-1}(\R(\M)).\] So $\overline{\R(f_n)}$ is well-defined. Clearly $N_n/B_n(\R(\M))\in \ke\overline{\R(f_n)}$ if and only if $\R(f_n)(N_n)=0$  if and only if $ N_n\in Z_n(\R(\M))$  so that $\ke\overline{\R(f_n)}=H_n(\R(\M))$. Also $\operatorname{coker}\overline{\R(f_n)}=Z_n(\R(\M))/ \im\overline{\R(f_n)}=H_{n-1}(\R(\M))$. Hence, by \cite [Theorem 3.10]{jawad} there exists a connecting semimodule homomorphism $\alpha_n : H_n(\R(\M''))\rightarrow H_{n-1}(\R(\M'))$ such that the sequence $$\cdots\rightarrow H_n(\R(\M'))\xrightarrow{H_n(\R(\phi))}H_n( \R(\M))\xrightarrow{H_n(\R(\psi))}H_n( \R(\M''))\xrightarrow{\alpha_n} H_{n-1}(\R(\M'))\rightarrow\cdots$$ is exact. 
\end {proof}
From the above Theorem, the following conclusion can be drawn.
\begin{cor}\label{cyclic}
Let $0\rightarrow\R(\M')\xrightarrow{\R(\phi)} \R(\M)\xrightarrow{\R(\psi)} \R(\M'')\rightarrow0$ be an exact sequence of complexes of $\R(R)$-semimodules such that for any positive integer $n$, $\im\R(f''_{n})$ is subtractive, $\R(\psi_n)$ is steady, and $\im\R(\phi_n)\subseteq \im\R(f_{n+1})$. If any two of three complexes $\R(\M')$, $\R(\M)$ and $\R(\M'')$ are acyclic, then the third is also acyclic. 
\end{cor}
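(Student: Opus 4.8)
The plan is to read off the statement from the long exact sequence supplied by Theorem~\ref{snake}. First I would observe that the three standing hypotheses of the Corollary---$\im\R(f''_n)$ subtractive, $\R(\psi_n)$ steady, and $\im\R(\phi_n)\subseteq\im\R(f_{n+1})$ for every positive integer $n$---are precisely those under which Theorem~\ref{snake} produces connecting homomorphisms $\alpha_n\colon H_n(\R(\M''))\rightarrow H_{n-1}(\R(\M'))$ together with exactness of
\[\cdots\rightarrow H_n(\R(\M'))\xrightarrow{H_n(\R(\phi))}H_n(\R(\M))\xrightarrow{H_n(\R(\psi))}H_n(\R(\M''))\xrightarrow{\alpha_n}H_{n-1}(\R(\M'))\rightarrow\cdots.\]
So no new construction is needed; the whole argument is a diagram chase through this sequence.

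The only elementary fact I would isolate is the following: if $A\xrightarrow{f}B\xrightarrow{g}C$ is a sequence of $\R(R)$-semimodule homomorphisms that is exact at $B$ (that is, $\im f=\ke g$) and $A=0$, then $\im f=\{0\}$; if, in addition, $C=0$, then $g$ is the zero homomorphism, so $\ke g=B$, whence $B=\{0\}$, i.e. $B=0$. Here one uses only that a semimodule homomorphism carries $0$ to $0$ (so the kernel of a map into $0$ is all of $B$) and that $\{0\}$ is a subsemimodule (so the image of a map out of $0$ is trivial)---both recorded in the Introduction. Consequently, whenever both of the terms flanking a fixed $H_n$ in the long exact sequence vanish, that $H_n$ vanishes as well.

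It then remains to run the three symmetric cases. If $\R(\M')$ and $\R(\M)$ are acyclic, apply the fact to the stretch $H_n(\R(\M))\rightarrow H_n(\R(\M''))\rightarrow H_{n-1}(\R(\M'))$, whose ends are $0$, to obtain $H_n(\R(\M''))=0$. If $\R(\M')$ and $\R(\M'')$ are acyclic, apply it to $H_n(\R(\M'))\rightarrow H_n(\R(\M))\rightarrow H_n(\R(\M''))$ to obtain $H_n(\R(\M))=0$. If $\R(\M)$ and $\R(\M'')$ are acyclic, apply it to $H_{n+1}(\R(\M''))\rightarrow H_n(\R(\M'))\rightarrow H_n(\R(\M))$ to obtain $H_n(\R(\M'))=0$. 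In every case $n\geq 0$ is arbitrary, with the usual convention that the long exact sequence terminates as $\cdots\rightarrow H_0(\R(\M''))\rightarrow 0$ (equivalently $H_{-1}(\R(-))=0$), which causes no difficulty; hence the third complex is acyclic in all degrees. There is essentially no obstacle beyond Theorem~\ref{snake} itself: the subtractivity and steadiness assumptions have already done their work in building the long exact sequence and are not used again, and the only delicate point---the behaviour of kernels and images at the zero semimodule---is settled by the basic definitions.
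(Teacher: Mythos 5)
Your proposal is correct and is exactly the argument the paper intends: the paper states the corollary as an immediate consequence of Theorem~\ref{snake} (giving no written proof), and your diagram chase---a term of an exact sequence flanked by zero semimodules must itself be zero, applied in the three symmetric cases---is the standard way to fill in that step. The only point worth recording is the one you already make explicit, namely that with the module-style notion of exactness used in the paper, $\im f=\{0\}$ and $\ke g=B$ force $B=0$ at the flanked spot.
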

The following result illustrates the naturality condition for exact homology sequences.
\begin{cor}
Let the diagram \begin{displaymath}
\xymatrix{
	0\ar[r]&\R(\M') \ar[r]^{\R(\phi)} \ar[d]_{\R(\beta')}& \R(\M)\ar[d]_{\R(\beta)} \ar[r]^{\R(\psi)}&\R(\M'')   \ar[d]_{\R(\beta'')}\ar[r]&0\\0\ar[r]&
	\R(\mathcal{N'}) \ar[r]_{\R(\phi')} & \R(\mathcal{N}) \ar[r]_{\R(\psi')}&\R(\mathcal{N''})\ar[r]&0
}
\end{displaymath} be  commutative where the rows are exact sequences of complexes having similar properties as in Theorem \ref{snake} and the vertical maps are mappings of complexes, the induced diagram
 \begin{displaymath}
\xymatrix{
\cdots\ar[r]&H_n(\R(\M')) \ar[r]^{H_n(\R(\phi))} \ar[d]_{H_n(\R(\beta'))}& H_n(\R(\M))\ar[d]_{H_n(\R(\beta))} \ar[r]^{H_n(\R(\psi))}&H_n(\R(\M''))   \ar[d]_{H_n(\R(\beta''))}\ar[r]^{\alpha_n}&H_{n-1}(\R(\M')) \ar[d]_{H_{n-1}(\R(\beta'))}\ar[r]&\cdots\\\cdots\ar[r]&
H_n(\R(\mathcal{N'})) \ar[r]_{H_n(\R(\phi'))} &H_n(\R(\mathcal{N})) \ar[r]_{H_n(\R(\psi'))}&H_n(\R(\mathcal{N''}))\ar[r]_{\alpha'_n}&H_{n-1}(\R(\mathcal{N'}))\ar[r]&\cdots
}
\end{displaymath}
 is commutative.	
\end{cor}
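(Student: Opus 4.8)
The plan is to deduce the commutativity of the two squares of the ``homology ladder'' separately, using the explicit description of the maps furnished by Theorems~\ref{RH} and~\ref{snake}. Concretely, there are two kinds of squares to treat: the squares of the form
\[
\xymatrix{
H_n(\R(\M')) \ar[r]^{H_n(\R(\phi))} \ar[d]_{H_n(\R(\beta'))}& H_n(\R(\M)) \ar[d]^{H_n(\R(\beta))}\\
H_n(\R(\mathcal{N'})) \ar[r]_{H_n(\R(\phi'))} & H_n(\R(\mathcal{N}))
}
\]
(and the analogous one with $\psi,\psi'$), and the squares involving the connecting homomorphisms $\alpha_n$ and $\alpha'_n$. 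First I would dispose of the former: since the given diagram of complexes commutes, we have $\R(\phi')_n\,\R(\beta')_n = \R(\beta)_n\,\R(\phi)_n$ at every level $n$ by the functoriality of $\R$, and applying Theorem~\ref{RH}(2) twice (once to the composite $\beta\circ\phi = \phi'\circ\beta'$ read in the two ways) gives $H_n(\R(\beta))\,H_n(\R(\phi)) = H_n(\R(\beta\circ\phi)) = H_n(\R(\phi'\circ\beta')) = H_n(\R(\phi'))\,H_n(\R(\beta'))$. The same argument handles the $\psi$–square. This part is essentially bookkeeping with the functoriality already established.

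The substantive part is the square
\[
\xymatrix{
H_n(\R(\M'')) \ar[r]^{\alpha_n} \ar[d]_{H_n(\R(\beta''))}& H_{n-1}(\R(\M')) \ar[d]^{H_{n-1}(\R(\beta'))}\\
H_n(\R(\mathcal{N''})) \ar[r]_{\alpha'_n} & H_{n-1}(\R(\mathcal{N'})).
}
\]
Here I would unwind the construction of $\alpha_n$ from the proof of Theorem~\ref{snake}, which (via \cite[Theorem 3.10]{jawad}) is the usual zig-zag: given a class represented by $N''_n \in Z_n(\R(\M''))$, choose $N_n \in \R(M_n)$ with $\R(\psi_n)(N_n) \equiv N''_n$, push down by $\R(f_n)$, and pull back along $\R(\phi_{n-1})$ to land in $Z_{n-1}(\R(\M'))$; then $\alpha_n[N''_n]$ is the Bourne class of that preimage. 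To prove $H_{n-1}(\R(\beta'))\,\alpha_n = \alpha'_n\,H_n(\R(\beta''))$, I would start with a representative $N''_n$ of a class in $H_n(\R(\M''))$, run the zig-zag for $\alpha_n$ to get a lift $N_n$ and a preimage $N'_{n-1}$, then check that $\R(\beta)_n(N_n)$ is a legitimate $\psi'$-lift of $\R(\beta'')_n(N''_n)$ — this uses the commutativity of the square relating $\psi$ to $\psi'$ — and that $\R(\beta')_{n-1}(N'_{n-1})$ is the corresponding $\phi'$-preimage of $\R(f'_n)$ applied to $\R(\beta)_n(N_n)$, which uses the commutativity of the squares relating $\phi$ to $\phi'$ and the $\R(f)$'s to the $\R(f')$'s together with injectivity of $\R(\phi'_{n-1})$. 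Hence the zig-zag computing $\alpha'_n\bigl(H_n(\R(\beta''))[N''_n]\bigr)$ may be taken to produce exactly $\R(\beta')_{n-1}(N'_{n-1})$, i.e.\ $H_{n-1}(\R(\beta'))\bigl(\alpha_n[N''_n]\bigr)$.

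The main obstacle is that, unlike in the module setting, the zig-zag involves choices that are only well-defined \emph{up to the Bourne relation} and that rely on the subtractivity, steadiness, and surjectivity hypotheses inherited from Theorem~\ref{snake}; so at each stage of the chase I must verify that replacing one choice of lift or preimage by another changes the outcome only by an element of the relevant boundary subsemimodule, and that the equalities ``$\R(\psi'_n)(\R(\beta_n)(N_n)) = \R(\beta''_n)(N''_n)$'' etc.\ that come out of the commutative diagram are genuine equalities and not merely Bourne-equivalences (or, where they are only Bourne-equivalences, that this still suffices to identify the classes, again invoking subtractivity of the appropriate image). Once the bookkeeping of these equivalence classes is set up carefully — essentially by running the proof of Theorem~\ref{snake} ``with a map of short exact sequences'' rather than a single one — the commutativity falls out, because every intermediate object produced by the $\beta$-images of the $\M$-zig-zag satisfies the defining properties of the corresponding object in the $\mathcal{N}$-zig-zag.
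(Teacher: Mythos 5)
Your proposal is correct and matches the intended argument: the paper's own proof is literally the single word ``Straightforward,'' and your decomposition — the $\phi$- and $\psi$-squares via functoriality (Theorem \ref{RH}(2)) and the connecting-map square by rerunning the zig-zag construction of $\alpha_n$ from Theorem \ref{snake} alongside the $\beta$'s, tracking Bourne classes and the subtractivity/steadiness/surjectivity hypotheses — is exactly the standard naturality chase the authors have in mind, spelled out in more detail than the paper gives. The only caveat worth a remark is that the corollary's hypothesis is commutativity of the diagram of $\R$-complexes, so where you invoke $\beta\circ\phi=\phi'\circ\beta'$ at the $R$-module level you should either assume the underlying module-level diagram commutes or argue directly from $\R(\beta)\R(\phi)=\R(\phi')\R(\beta')$ using the explicit formula for $H_n$ on Bourne classes.
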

\begin{proof}
Straightforward.
\end{proof}
\section{A homotopy pair between maps of complexes}
Let $\mathcal{M}=(\mathsf{M}_n, f_n)_{n\geq0}$ and $\M'=(\mathsf{M}'_n, f'_n)_{n\geq0}$ be two complexes of $\mathsf{R}$-semimodules and  $\mathsf{R}$-semimodule homomorphisms. Any two maps of complexes $\phi$ and $\psi$ from $\M$ to $\M'$ are saied to be \textit{homotopic}  if there exist two sequences $s=\{s_n:\mathsf{M}_n\rightarrow\mathsf{M'}_n\}_{n\geq 0}$ and $t= \{t_n:\mathsf{M}_n\rightarrow\mathsf{M'}_n\}_{n\geq 0}$ of $\mathsf{R}$-semimodule homomorphisms  such that \[\phi_n+s_{n-1}f_n+f'_{n+1}s_n=\psi_n+t_{n-1}f_{n}+f'_{n+1}t_n\ \ (\ast) \] for all $n\geq 0$. In this case, $ (s,t) $  is called a \textit{homotopy pair} (or a \textit{homotopy} for short)  from  $\phi$ to $\psi$  and we write $ \phi\overset{(s, t)}{\simeq}\psi$. Note that, by letting  $\M$ and $\M'$ as chain complexes of $R$-modules and considering $t_n=0$ for all $n\geq 1$ in $(\ast)$, the equation reduces to the standard definition of homotopy between maps of complexes of $R$-modules, where we write $ \phi\overset{s}{\simeq}\psi$. 
\begin{thm}\label{Rhomo}
Let $\phi$ and $\psi$ be two maps of complexes of $R$-modules such that $\phi\overset{s}{\simeq}\psi$. Then $\R(\phi)\overset{(\R(s), \R(0))}{\simeq}\R(\psi)$ and  $H_n(\R(\phi))=H_n(\R(\psi))$.
\end{thm}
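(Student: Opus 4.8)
The plan is to prove the two assertions of Theorem~\ref{Rhomo} in turn, the first being essentially formal and the second following from it via Theorem~\ref{RH}. For the homotopy statement, I start from the hypothesis $\phi \overset{s}{\simeq} \psi$, which by the convention just established means that there is a sequence $s=\{s_n : M_n \to M'_n\}_{n\geq 0}$ of $R$-module homomorphisms with $\phi_n + s_{n-1}f_n + f'_{n+1}s_n = \psi_n + 0\cdot f_n + f'_{n+1}\cdot 0$, i.e.\ the ordinary chain-homotopy identity $\psi_n - \phi_n = s_{n-1}f_n + f'_{n+1}s_n$ with $t_n = 0$. I then apply the functor $\R$ to every module and map in sight. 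The key point is that $\R$ is a covariant functor (\cite[Theorem 4]{smr}), so $\R(g_1 g_2) = \R(g_1)\R(g_2)$ for composable $R$-module maps, and $\R$ sends the identity to the identity; hence $\R(s_{n-1}f_n) = \R(s_{n-1})\R(f_n)$, $\R(f'_{n+1}s_n) = \R(f'_{n+1})\R(s_n)$, and $\R(0) = 0$. Applying $\R$ to the defining equation $(\ast)$ for the homotopy $(s,0)$ directly yields
\[
\R(\phi_n) + \R(s_{n-1})\R(f_n) + \R(f'_{n+1})\R(s_n) = \R(\psi_n) + \R(0_{n-1})\R(f_n) + \R(f'_{n+1})\R(0_n)
\]
for all $n \geq 0$, which is exactly the statement that $(\R(s),\R(0))$ is a homotopy pair from $\R(\phi)$ to $\R(\psi)$, i.e.\ $\R(\phi)\overset{(\R(s),\R(0))}{\simeq}\R(\psi)$.

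For the homology statement $H_n(\R(\phi)) = H_n(\R(\psi))$, I work at the level of $n$-cycles and the Bourne quotient. Fix $n \geq 0$ and take a cycle $N \in Z_n(\R(\M)) = \ke \R(f_n)$. By Theorem~\ref{RH}, $H_n(\R(\phi))$ and $H_n(\R(\psi))$ send the Bourne class $N/B_n(\R(\M))$ to $\R(\phi_n)(N)/B_n(\R(\M'))$ and $\R(\psi_n)(N)/B_n(\R(\M'))$ respectively, so it suffices to show these two Bourne classes in $H_n(\R(\M'))$ coincide. Evaluate the homotopy identity $(\ast)$ for $(\R(s),\R(0))$ at $N$: since $N$ is a cycle, $\R(f_n)(N) = 0$, so the terms $\R(s_{n-1})\R(f_n)(N)$ and $\R(0_{n-1})\R(f_n)(N)$ both vanish (here I use that a semimodule homomorphism sends $0$ to $0$), leaving
\[
\R(\phi_n)(N) + \R(f'_{n+1})(\R(s_n)(N)) = \R(\psi_n)(N) + \R(f'_{n+1})(\R(0_n)(N)) = \R(\psi_n)(N),
\]
the last equality because $\R(0_n)(N) = 0$ and $\R(f'_{n+1})(0) = 0$. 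Now $\R(f'_{n+1})(\R(s_n)(N)) \in \im \R(f'_{n+1}) = B_n(\R(\M'))$, so this equation exhibits $\R(\phi_n)(N)$ and $\R(\psi_n)(N)$ as differing by an element of $B_n(\R(\M'))$; more precisely, writing $L = \R(f'_{n+1})(\R(s_n)(N)) \in B_n(\R(\M'))$, we have $\R(\psi_n)(N) = \R(\phi_n)(N) \+ L = \R(\phi_n)(N)\+L$ and $\R(\phi_n)(N) = \R(\phi_n)(N)\+0$ with $0 \in B_n(\R(\M'))$, so $\R(\phi_n)(N) \equiv_{B_n(\R(\M'))} \R(\psi_n)(N)$ by the Bourne relation. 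Hence $\R(\phi_n)(N)/B_n(\R(\M')) = \R(\psi_n)(N)/B_n(\R(\M'))$, and therefore $H_n(\R(\phi))$ and $H_n(\R(\psi))$ agree on every class $N/B_n(\R(\M))$, i.e.\ $H_n(\R(\phi)) = H_n(\R(\psi))$ as $\R(R)$-semimodule homomorphisms.

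I do not anticipate a serious obstacle here: the whole argument is the semimodule transcription of the classical fact that chain-homotopic maps induce equal maps on homology, and the only places that require a moment's care are (i) checking that the extra $t$-terms built into the paper's non-standard notion of homotopy genuinely drop out when $t_n = 0$ and one applies $\R$ (this is immediate since $\R(0) = 0$), and (ii) replacing the subtraction "$\psi - \phi$ is a boundary" of the module setting by the correct Bourne-relation statement "$\psi$ and $\phi$ become equal after adding a common boundary", since one cannot subtract in a semimodule. The mild subtlety worth flagging is that equality of Bourne classes only needs $\R(\phi_n)(N) \+ L = \R(\psi_n)(N) \+ L'$ for \emph{some} $L, L' \in B_n(\R(\M'))$, which the displayed identity supplies directly with $L' = 0$; no subtractivity of $B_n(\R(\M'))$ is invoked, so the result holds with no extra hypotheses beyond $\phi \overset{s}{\simeq}\psi$. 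Finally, combining this with Theorem~\ref{RH} gives the functorial consequence stated in the introduction (Corollary~\ref{he}) that homotopy-equivalent complexes have isomorphic radical homology.
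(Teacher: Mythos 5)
Your proof is correct and follows essentially the same route as the paper: apply $\R$ to the homotopy identity with $t=0$, then evaluate on a cycle $N$ so that the $\R(s_{n-1})\R(f_n)$ term drops out and the remaining correction $\R(f'_{n+1})\R(s_n)(N)$ lies in $B_n(\R(\M'))$, giving equality of Bourne classes and hence $H_n(\R(\phi))=H_n(\R(\psi))$. The one step you (exactly like the paper) take for granted is that applying $\R$ to a sum of $R$-module homomorphisms yields the corresponding sum of $\R(R)$-semimodule homomorphisms, an additivity property that does not follow from the composition-preserving functoriality of $\R$ alone; since the paper's own proof makes the identical leap, your argument matches it in both substance and level of rigor.
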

\begin{proof}
Considering the homotpicity $\phi\overset{s}{\simeq}\psi$ between the maps of complexes of $R$-modules $\phi, \psi: \M\rightarrow\M'$, and applying the functor $\R$ to both sides of the equality $(\ast)$, we obtain $\R(\phi)\overset{(\R(s), \R(0))}{\simeq}\R(\psi)$. In particular, if $N\in Z_n(\R(M))$, then $\R(f_n(N))=0$, and hence
\[\R( \phi_n)(N)\+(\R(f'_{n+1})\R(s_n))(N)=\R(\psi_n)(N),\]
 where $\R(f'_{n+1}s_n)(N)\in B_n(\R(M'))$. Thus we conclude that  $H_n(\R(\phi))(N/B_n(\R(\M)))=H_n(\R(\psi))(N/B_n(\R(\M))$. 
 \end{proof}  
 A complex $\M$ of $R$-modules is said to be \textit{radical acyclic} if $H_n(\R(\M))=0$ for all $n\geq 0$.
\begin{cor}
Let  $\M$ be a complex of $R$-modules. If  the identity map and zero map on $\M$ are homotopic, then the complex $\M$ is radical acyclic.
\end{cor}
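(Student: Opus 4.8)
The plan is to deduce this corollary directly from Theorem~\ref{Rhomo} by specializing to the case $\M=\M'$, $\phi=I_{\M}$ and $\psi=0$. First I would note that the hypothesis says exactly $I_{\M}\overset{s}{\simeq}0$ for some homotopy $s=\{s_n:M_n\to M_n\}_{n\geq 0}$ of $R$-module maps, so Theorem~\ref{Rhomo} applies and yields $H_n(\R(I_{\M}))=H_n(\R(0))$ for all $n\geq 0$. The point is then to identify both sides: by part~(1) of Theorem~\ref{RH} the left-hand side is the identity map $I_{H_n(\R(\M))}$ on the $\R(R)$-semimodule $H_n(\R(\M))$, while the right-hand side, being induced by the zero map of complexes, is the zero endomorphism of $H_n(\R(\M))$. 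Equating the identity and the zero map on $H_n(\R(\M))$ forces every Bourne class $N/B_n(\R(\M))$ to equal the class $0/B_n(\R(\M))$, which is precisely the statement that $H_n(\R(\M))=0$, i.e. $\M$ is radical acyclic.

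The one place that needs a line of care is the claim that the map of complexes induced by $\psi=0$ gives the zero endomorphism on homology. By the explicit formula in Theorem~\ref{RH}, for $N\in Z_n(\R(\M))$ we have $H_n(\R(0))(N/B_n(\R(\M)))=\rad(0(N))/B_n(\R(\M))=\rad(0)/B_n(\R(\M))$; and $\rad(0)$ is the radical of the zero submodule of $M_n$, which is the zero element $0_{\R(M_n)}$ of the semimodule $\R(M_n)$, hence its Bourne class is the zero of $H_n(\R(\M))$. So $H_n(\R(0))$ is indeed the zero map, and combined with $H_n(\R(I_{\M}))=I_{H_n(\R(\M))}$ from Theorem~\ref{RH}(1) we get that the identity of $H_n(\R(\M))$ is the zero map, so $H_n(\R(\M))=0$.

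I do not anticipate a genuine obstacle here; the corollary is a formal consequence of the two cited results, and the only subtlety is the (routine) bookkeeping that "$H_n$ applied to the zero map is zero" in the Bourne-quotient setting, which I would spell out in exactly the one sentence above. The proof is therefore short: invoke Theorem~\ref{Rhomo} with $\phi=I_{\M}$, $\psi=0$; rewrite the resulting equality of induced maps using Theorem~\ref{RH}(1) and the displayed formula for $H_n(\R(-))$ on morphisms; and conclude $I_{H_n(\R(\M))}=0$, whence $H_n(\R(\M))=0$ for all $n\geq 0$.
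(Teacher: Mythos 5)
Your proposal is correct and follows exactly the paper's argument: invoke Theorem~\ref{Rhomo} with $\phi=I_{\M}$, $\psi=0$ to get $H_n(\R(I))=H_n(\R(0))$, identify these as $I_{H_n(\R(\M))}$ and $0$, and conclude $H_n(\R(\M))=0$. The only difference is that you spell out the routine verification that the zero map of complexes induces the zero map on radical homology, which the paper leaves implicit.
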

\begin{proof}
Suppose that  the identity map $I$ and the zero map $0$ be homotopic on $\M$. By Theorem \ref{Rhomo}, it follows that $H_n(\R(I))=H_n(\R(0))$, and so $I_{H_n(\R(\M))}=0_{H_n(\R(\M)}$ for all $n\geq 0$. Thus we conclude that $H_n(\R(\M))=0$  for all $n\geq 0$.

\end{proof}
Any two complexes $\M$ and  $\M'$ of $R$-modules are saied to be \textit{homotopically equivalent} if there exist two maps $\phi:\M\rightarrow\M'$ and $\psi: \M'\rightarrow\M$ such that $\phi\psi\simeq I_{\M'}$ and $ \psi\phi\simeq I_{\M}.$
\begin{cor}\label{he}
Let $\M$ and $\M'$ be two homotopically equivalent complexes of $R$-modules. Then for any  $n\geq0$, the $\R(R)$-semimodules $H_n(\R(\M))$ and $H_n(\R(\M'))$ are  isomorphic.
\end{cor}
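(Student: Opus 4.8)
The plan is to obtain this as a purely formal consequence of two facts already established: the functoriality of $H_n(\R(-))$ (Theorem \ref{RH}) and the homotopy invariance of radical homology (Theorem \ref{Rhomo}). Since $\M$ and $\M'$ are homotopically equivalent, I would begin by fixing maps of complexes of $R$-modules $\phi:\M\to\M'$ and $\psi:\M'\to\M$ with $\psi\phi\simeq I_{\M}$ and $\phi\psi\simeq I_{\M'}$, and recording that these are homotopies of complexes of $R$-modules in the sense of $(\ast)$ with $t=0$, which is exactly the situation covered by Theorem \ref{Rhomo}.

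Next I would apply Theorem \ref{Rhomo} to each of the two homotopies: from $\psi\phi\simeq I_{\M}$ we get $H_n(\R(\psi\phi))=H_n(\R(I_{\M}))$ for every $n\ge 0$, and from $\phi\psi\simeq I_{\M'}$ we get $H_n(\R(\phi\psi))=H_n(\R(I_{\M'}))$. Then, invoking Theorem \ref{RH}(2) to rewrite the composites on the left and Theorem \ref{RH}(1) to identify the identities on the right, these become
\[H_n(\R(\psi))\,H_n(\R(\phi))=I_{H_n(\R(\M))},\qquad H_n(\R(\phi))\,H_n(\R(\psi))=I_{H_n(\R(\M'))}\]
for all $n\ge 0$.

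Finally I would conclude that the $\R(R)$-semimodule homomorphism $H_n(\R(\phi)):H_n(\R(\M))\to H_n(\R(\M'))$ admits $H_n(\R(\psi))$ as a two-sided inverse which is itself an $\R(R)$-semimodule homomorphism; hence $H_n(\R(\phi))$ is a bijective homomorphism with homomorphism inverse, i.e. an isomorphism in $\R(R){-}\boldsymbol{\sf{Semod}}$, so $H_n(\R(\M))\cong H_n(\R(\M'))$ for every $n\ge 0$. I do not expect a real obstacle here; the only points needing a sentence of care are the bookkeeping of the composition order when applying Theorem \ref{RH}(2) to $\psi\phi$ and $\phi\psi$, and the remark that a semimodule homomorphism possessing a two-sided semimodule-homomorphism inverse is an isomorphism of $\R(R)$-semimodules (so that no separate check of the inverse's additivity or $\R(R)$-linearity is required).
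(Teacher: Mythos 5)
Your proposal is correct and follows essentially the same route as the paper: both arguments combine the homotopy invariance of Theorem \ref{Rhomo} with the functoriality statements of Theorem \ref{RH}(1)--(2) to show that $H_n(\R(\psi))$ is a two-sided inverse of $H_n(\R(\phi))$, hence that $H_n(\R(\phi))$ is an $\R(R)$-semimodule isomorphism. Your version is, if anything, slightly more careful about the order in which the two theorems are invoked, but there is no substantive difference.
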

\begin{proof}
Assume that $\phi:\M\rightarrow\M'$ is a homotopy equivalence of $R$-modules. Then there exists a map $\psi: \M'\rightarrow\M$ such that $\phi\psi\simeq I_{\M'}$ and $ \psi\phi\simeq I_{\M}$. So by Theorem \ref{RH}(2), $\R(\phi)\R(\psi)\simeq I_{\R(\M')}$ and $\R(\psi)\R(\phi)\simeq I_{\R(\M)}$. Therefore by Theorem \ref{Rhomo},  $H_n(\R(\phi)): H_n(\R(\M))\rightarrow H_n(\R(\M'))$ is an $\R(R)$-semimodule isomorphism for all $n\geq0$.
\end{proof}
We say that an $R$-module  $P$ is \textit{ radical projective} if $\R(P)$ is a projective  $\R(R)$-semimodule, i.e., for every short exact sequence of  $\R(R)$-semimodules \[0\rightarrow\mathsf{M'}\xrightarrow{f} \mathsf{M}\xrightarrow{g} \mathsf{M''}\rightarrow 0,\] the induced sequence \[0\rightarrow \operatorname{Hom}(\R(P),\mathsf{M'})\xrightarrow{f^*}\operatorname{Hom}(\R(P),\mathsf{M})\xrightarrow{g^*} \operatorname{Hom}(\R(P),\mathsf{M''})\rightarrow0\] is an exact sequence of $\R(R)$-semimodules. It is worth noting that our definition of the projectivity of a semimodule is consistent with that presented in \cite[]{Golan}. However, alternative approaches to semimodule projectivity have been explored in \cite{flatjawad,patch}.  \\
Let $M$ be an $R$-module. A \textit{radical projective resolution} of $M$ is a complex $\mathcal{P}:=(P_n,f_n)_{(n\geq 0)}$ of $R$-modules $P_n$ and $R$-module homomorphisms $f_n$, together with an $R$-module homomorphism $g:P_0\rightarrow M$ such that the sequence \[\cdots\rightarrow\R(P_n)\xrightarrow{\R(f_n)}\R(P_{n-1})\xrightarrow{\R(f_{n-1})} \R(P_{n-2})\cdots\rightarrow \R(P_{1})\xrightarrow{\R(f_1)}\R(P_0)\xrightarrow{\R(g)}\R(M)\rightarrow0\] is an exact sequence of $\R(R)$-semimodules and all the $P_i$ are radical projective.  Clearly, the complex $\mathcal{P}$, satisfies $H_n(\R(\mathcal{P}))=0$ for all $n\geq1$ and \[H_0(\R(\mathcal{P}))=Z_0(\R(\mathcal{P}))/B_0(\R(\mathcal{P}))=\ke\R(f_0)/\im\R(f_1)=\R(P_0)/\ke g\simeq\R(P).\] We sometimes use $\R(\mathcal{P})\rightarrow\R(M)\rightarrow 0$ to denote $\mathcal{P}$ is a radical projective resolution of $M$.
\begin{example} Let the notations be as before.
 \begin{itemize}
\item[(1)]
Considering  $P$ a radical projective $R$-module, we have always a trivial radical projective resolution by setting $P_0=P$, $P_n=0$ and $f_n=0$ for $n\geq1$. Here, $g$ is assumed to be the identity homomorphism on $P$.
\item[(2)]
Let $F$ be a field, $V$ an $F$-vector space, $ W $ a subspace of $V$ and $P=V/W$. Then by \cite[Corollary 24]{smr} the sequence $0\rightarrow\R(W)\xrightarrow{\R(i)} \R(V)\xrightarrow{\R(\pi)}\R(P)\rightarrow 0$ is exact, where $i:W\rightarrow V$ is the inclusion map and $\pi:V\rightarrow P$ is the natural projection map. So by considering $P_0=P$, $P_1=V$ and $P_2=W$ and $P_n=0$ for all $n\geq 3$, and setting $g=\pi$, $f_1=i$ and  $f_n=0$ for all $n\geq 2$, we see that  $\mathcal{P}=(P_n,f_n)$ forms a projective resolution of $\R(P)$.
\end{itemize}
\end{example}
As stated in \cite[Example 17.1]{Golan}, for any semiring $\mathsf{R}$ and any nonempty set $A$, the subset $\{\chi_a\}_{a\in A}$ considering of characterristic functions on any element $a\in A$, forms a basis for the free $\mathsf{R}$-semimodule $\mathsf{R}^{(A)}$. Thus by \cite[Proposition 17.12]{Golan}, we see that if $\mathsf{M}$ is an $\mathsf{R}$-semimodule and $A$ is a nonempty set, and if $g$ is a function from $A$ to $\mathsf{M}$, then there exists a unique $\mathsf{R}$-semimodule homomorphism $\alpha: \mathsf{R}^{(A)}\rightarrow \mathsf{M}$ given by $\chi_a\mapsto g(a)$ for all $a\in A$. This fact is used in the following result.
\begin{lemma}\label{pro}
Let  $M$ and $M'$ be $R$-modules. Then, if $f:M\rightarrow M'$ is an $R$-module homomorphism such that $\R(f)$ is steady and if for any radical projective $R$-module $P$, the mappings $\alpha, \alpha': \R(P)\rightarrow\R(M)$ are $\R(R)$-semimodule homomorphisms such that $\R(f)\alpha=\R(f)\alpha'$, then there are $\R(R)$-semimodule  homomorphisms $\beta, \beta':\R(P)\rightarrow\R(M)$ such that $ \alpha+\beta=\alpha'+\beta'$ and   $ \R(f)\beta=\R(f)\beta'=0$ 
\end{lemma}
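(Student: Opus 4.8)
The plan is to exploit the radical projectivity of $P$ to realize $\R(P)$ as a retract of a free $\R(R)$-semimodule, and then to make the corrections furnished by the steadiness of $\R(f)$ on a basis of that free semimodule, where they automatically organize into $\R(R)$-semimodule homomorphisms. First I would pick a generating set $A$ of the $\R(R)$-semimodule $\R(P)$ (for instance $A=\R(P)$ itself); by \cite[Proposition 17.12]{Golan}, sending the basis $\{\chi_a\}_{a\in A}$ of $\R(R)^{(A)}$ to the chosen generators yields a surjective $\R(R)$-semimodule homomorphism $p:\R(R)^{(A)}\rightarrow\R(P)$. Then $0\rightarrow\ke p\rightarrow\R(R)^{(A)}\xrightarrow{p}\R(P)\rightarrow0$ is a short exact sequence of $\R(R)$-semimodules, so, since $P$ is radical projective, the map $\Hom(\R(P),\R(R)^{(A)})\rightarrow\Hom(\R(P),\R(P))$ induced by $p$ is surjective; choose $\iota:\R(P)\rightarrow\R(R)^{(A)}$ with $p\iota=I_{\R(P)}$.

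Next I would form the composites $\alpha p,\alpha' p:\R(R)^{(A)}\rightarrow\R(M)$. From $\R(f)\alpha=\R(f)\alpha'$ we get $\R(f)(\alpha p(\chi_a))=\R(f)(\alpha' p(\chi_a))$ for every $a\in A$, so the steadiness of $\R(f)$ provides $K_a,K'_a\in\ke\R(f)$ with $\alpha p(\chi_a)\+K'_a=\alpha' p(\chi_a)\+K_a$. Applying \cite[Proposition 17.12]{Golan} once more, let $\gamma,\gamma':\R(R)^{(A)}\rightarrow\R(M)$ be the $\R(R)$-semimodule homomorphisms with $\gamma(\chi_a)=K_a$ and $\gamma'(\chi_a)=K'_a$. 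The homomorphisms $\alpha p\+\gamma'$ and $\alpha' p\+\gamma$ agree on each $\chi_a$, hence coincide (homomorphisms out of a free semimodule are determined by their values on the basis), and $\R(f)\gamma$, $\R(f)\gamma'$ vanish on each $\chi_a$, hence are the zero homomorphism. Setting $\beta:=\gamma'\iota$ and $\beta':=\gamma\iota$, both from $\R(P)$ to $\R(M)$, precomposition with $\iota$ together with $p\iota=I_{\R(P)}$ gives $\alpha\+\beta=(\alpha p\+\gamma')\iota=(\alpha' p\+\gamma)\iota=\alpha'\+\beta'$, while $\R(f)\beta=(\R(f)\gamma')\iota=0$ and $\R(f)\beta'=(\R(f)\gamma)\iota=0$, which is exactly what is claimed.

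The step that carries the real content is the passage to the free cover: one cannot apply steadiness directly to $\alpha$ and $\alpha'$, because a pointwise choice of correcting elements of $\ke\R(f)$ for each radical submodule $N\in\R(P)$ need not depend additively or $\R(R)$-linearly on $N$. Radical projectivity of $P$ is precisely what lets us move the question onto $\R(R)^{(A)}$, where the choices, being made independently on the basis, extend automatically to homomorphisms, and the hypothesis that $\R(f)$ is steady is then only ever used basis-element by basis-element. The remaining verifications — that sums and composites of $\R(R)$-semimodule homomorphisms are again such (the relevant $\Hom$'s being $\R(R)$-semimodules), and that precomposition and postcomposition are additive — are routine.
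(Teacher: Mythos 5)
Your proof is correct and follows essentially the same route as the paper's: realize $\R(P)$ as a retract of a free $\R(R)$-semimodule, apply the steadiness of $\R(f)$ only on basis elements, extend the resulting corrections to homomorphisms via \cite[Proposition 17.12]{Golan}, and precompose with the splitting. The only (harmless) difference is that you construct the retract directly from the definition of radical projectivity applied to a free presentation, whereas the paper simply cites \cite[Proposition 17.16]{Golan} for the same fact.
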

\begin{proof}
Since $P$ is a radical projective $R$-module, by \cite[Proposition 17.16]{Golan}, $\R(P)$ is a retract of a free $\R(R)$-semimodule, i.e., there are a surjective $\R(R)$-semimodule homomorphism $\phi:(\R(R))^{(\Gamma)}\rightarrow\R(P)$ and an injective $\R(R)$-semimodule homomorphism $\psi:\R(P)\rightarrow(\R(R))^{(\Gamma)}$ for some indexe set $\Gamma$  such that $\phi\psi=I_{\R(P)}$. Now, since  $\R(f)\alpha\phi=\R(f)\alpha'\phi$ and $\R(f)$ is steady, we get that for every  $\gamma\in \Gamma$  there are $N_{\gamma}, N'_{\gamma}\in\ke(\R(f))$ such that $\alpha\phi(\gamma)\+N_{\gamma}=\alpha'\phi(\gamma)\+N'_{\gamma}$. By \cite[Proposition 17.12]{Golan}, there are  unique $\R(R)$-semimodule homomorphisms $\lambda, \lambda':\R(R))^{(\Gamma)}\rightarrow \R(M)$ such that $\lambda(\gamma)=N_{\gamma}$ and  $\lambda'(\gamma)=N'_{\gamma}$ for every $\gamma\in\Gamma$. Thus by considering $\beta:=\lambda\psi$ and $\beta':=\lambda'\psi$, we have $$\R(f)\beta=\R(f)(\lambda\psi)=(\R(f)\lambda)\psi=0=(\R(f)\lambda')\psi=\R(f)(\lambda'\psi)=\R(f)\beta'.$$ Moreover, for each $\gamma\in\Gamma$ we have $$(\alpha\phi+\lambda)(\gamma)=(\alpha\phi)(\gamma)+N_{\gamma}=\alpha'\phi(\gamma)\+N'_{\gamma}=(\alpha'\phi+\lambda')(\gamma),$$ whence $\alpha\phi+\lambda=\alpha'\phi+\lambda'$. Hence we conclude that 
\begin{align*}
 \alpha+\beta &=\alpha+\lambda\psi =\alpha  I_{\R(P)}+\lambda\psi = \alpha(\phi\psi)+\lambda\psi\\ &=(\alpha\phi+\lambda)\psi=(\alpha'\phi+\lambda')\psi =\alpha'(\phi\psi)+\lambda'\psi\\ &=\alpha' I_{\R(P)}+\lambda'\psi=\alpha'+\lambda'\psi=\alpha'+\beta'.
\end{align*}
\end{proof}
Next, we show that for any $R$-module homomorphism $g:M\rightarrow M'$, $\R(g)$ can be lifted to a mapping $\phi: \R(\mathcal{P})\rightarrow \R(\mathcal{P'})$ of complexes of $\R(R)$-semimodules and this lifting map is unique up to homotopy, where $\mathcal{P}=(P_n,f_n)_{n\geq 0}$ and $\mathcal{P'}=(P'_n,f'_n)_{n\geq 0}$ are radical projective resolutions of $M$ and $M'$ respectively.  
\begin{thm} \label{exist}
Let $\mathcal{P}=(P_n,f_n)_{n\geq 0}$ and $\mathcal{P'}=(P'_n,f'_n)_{n\geq 0}$  be radical projective resolutions of $M$ and $M'$, respectively. Then for any $R$-module homomorphism $g: M\rightarrow M'$, there exists a mapping of complexes $\phi:\R(\mathcal{P})\rightarrow \R(\mathcal{P}')=\{\phi_n:\R(P_n)\rightarrow \R(P'_n)\}_{n\geq 0}$ such that $\R(g)\R(f)=\R(f')\phi_0$. Moreover, $\phi$ is unique up to a homotopy.
\end{thm}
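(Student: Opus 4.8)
The plan is to run the semimodule analogue of the classical comparison theorem (the ``fundamental lemma of homological algebra''), building the lift $\phi$ one degree at a time. Throughout, write $f\colon P_0\to M$ and $f'\colon P'_0\to M'$ for the augmentation maps of the two resolutions. Two facts will be used repeatedly. First, since $\R$ is a functor and $\mathcal P'$ is a radical projective resolution, $\R(\mathcal P')$ is a complex, $\R(f')\colon\R(P'_0)\to\R(M')$ is surjective, $\ke\R(f')=\im\R(f'_1)$, and $\ke\R(f'_{n-1})=\im\R(f'_n)$ for all $n\geq 2$. Second, each $\R(P_n)$ is projective, hence a retract of a free $\R(R)$-semimodule (exactly as in the proof of Lemma \ref{pro}, via \cite[Proposition 17.16]{Golan}), so every $\R(R)$-semimodule homomorphism out of $\R(P_n)$ factors through any surjection onto its target.

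\textbf{Existence.} First I would produce $\phi_0$: since $\R(f')$ is surjective and $\R(P_0)$ is projective, the homomorphism $\R(g)\R(f)\colon\R(P_0)\to\R(M')$ lifts to some $\phi_0\colon\R(P_0)\to\R(P'_0)$ with $\R(f')\phi_0=\R(g)\R(f)$, which is the asserted degree-$0$ identity. Then, inductively, given $\phi_{n-1}$ with $\R(f'_{n-1})\phi_{n-1}=\phi_{n-2}\R(f_{n-1})$ (read in degree $0$ as $\R(f')\phi_0=\R(g)\R(f)$), one computes $\R(f'_{n-1})\bigl(\phi_{n-1}\R(f_n)\bigr)=\phi_{n-2}\R(f_{n-1})\R(f_n)=0$, using $\R(f_{n-1})\R(f_n)=\R(f_{n-1}f_n)=0$ (and, when $n=1$, $\R(f')\phi_0\R(f_1)=\R(g)\R(ff_1)=0$ since $ff_1=0$). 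Hence $\phi_{n-1}\R(f_n)$ takes values in $\ke\R(f'_{n-1})=\im\R(f'_n)$, so it may be regarded as a homomorphism $\R(P_n)\to\im\R(f'_n)$; lifting it through the surjection $\R(f'_n)\colon\R(P'_n)\to\im\R(f'_n)$ by projectivity of $\R(P_n)$ gives $\phi_n$ with $\R(f'_n)\phi_n=\phi_{n-1}\R(f_n)$. This completes the induction and yields a mapping of complexes $\phi\colon\R(\mathcal P)\to\R(\mathcal P')$ with $\R(g)\R(f)=\R(f')\phi_0$.

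\textbf{Uniqueness up to homotopy.} Let $\phi,\phi'$ be two such lifts of $\R(g)$. I would construct a homotopy pair $(s,t)$, with $s_n,t_n\colon\R(P_n)\to\R(P'_{n+1})$ and $s_{-1}=t_{-1}=0$, satisfying $\phi_n+s_{n-1}\R(f_n)+\R(f'_{n+1})s_n=\phi'_n+t_{n-1}\R(f_n)+\R(f'_{n+1})t_n$ for all $n\geq 0$; this is the step where Lemma \ref{pro} replaces the subtraction used in the module case. In degree $0$, from $\R(f')\phi_0=\R(g)\R(f)=\R(f')\phi'_0$, Lemma \ref{pro} (applied to the module $P_0$ and the map $f'$) supplies $\beta_0,\beta'_0\colon\R(P_0)\to\R(P'_0)$ with $\phi_0+\beta_0=\phi'_0+\beta'_0$ and $\im\beta_0,\im\beta'_0\subseteq\ke\R(f')=\im\R(f'_1)$; lifting $\beta_0,\beta'_0$ through $\R(f'_1)$ gives $s_0,t_0$ with $\phi_0+\R(f'_1)s_0=\phi'_0+\R(f'_1)t_0$. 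Inductively, having $s_{n-1},t_{n-1}$, I would set $u=\phi_n+s_{n-1}\R(f_n)$ and $v=\phi'_n+t_{n-1}\R(f_n)$; composing the degree-$(n-1)$ identity on the right with $\R(f_n)$ and using $\R(f_{n-1})\R(f_n)=0$ together with $\R(f'_n)\phi_n=\phi_{n-1}\R(f_n)$ and $\R(f'_n)\phi'_n=\phi'_{n-1}\R(f_n)$ yields $\R(f'_n)u=\R(f'_n)v$. Then Lemma \ref{pro} (with $P_n$ and $f'_n$) gives $\beta_n,\beta'_n$ with $u+\beta_n=v+\beta'_n$ and images in $\ke\R(f'_n)=\im\R(f'_{n+1})$, and lifting through $\R(f'_{n+1})$ produces $s_n,t_n$ with $u+\R(f'_{n+1})s_n=v+\R(f'_{n+1})t_n$, which is exactly the degree-$n$ identity. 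Hence $\phi\overset{(s,t)}{\simeq}\phi'$.

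\textbf{Main obstacle.} The genuine difficulty is the absence of subtraction: in the module comparison theorem one lifts the \emph{difference} $\phi_n-\phi'_n$, which has no literal substitute here. Lemma \ref{pro} is precisely the device that converts an equality $\R(f'_n)u=\R(f'_n)v$ into an honest pair $(\beta_n,\beta'_n)$ of homomorphisms into $\ke\R(f'_n)$ with $u+\beta_n=v+\beta'_n$; so the real content of the induction is (i) verifying, at each stage, the compatibility $\R(f'_n)u=\R(f'_n)v$ from the previous homotopy identity and the complex relations $\R(f_{n-1})\R(f_n)=0$, and (ii) keeping the low-degree conventions straight ($s_{-1}=t_{-1}=0$, the role of the augmentations in degrees $0$ and $1$). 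The steadiness of the $\R(f'_n)$ and of $\R(f')$ is what is being invoked through Lemma \ref{pro} and is to be understood as part of the hypotheses on the resolutions.
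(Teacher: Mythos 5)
Your proposal is correct and follows essentially the same route as the paper's proof: existence by degreewise lifting of $\phi_{n-1}\R(f_n)$ through the surjection $\R(f'_n)$ onto $\im\R(f'_n)=\ke\R(f'_{n-1})$ using projectivity of $\R(P_n)$, and uniqueness by composing the previous homotopy identity with the differential, invoking Lemma \ref{pro} (via steadiness of the $\R(f'_n)$, which the paper likewise uses without stating it in the theorem) to replace subtraction, and then lifting the resulting maps into $\ke\R(f'_n)=\im\R(f'_{n+1})$ through $\R(f'_{n+1})$. The only differences are an index shift in the induction and your degree-$0$ base case versus the paper's trivial degree-$(-1)$ conventions, which are immaterial.
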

\begin{proof}
We first prove the existence of $\phi$. According to the diagram
 \begin{displaymath}
\xymatrix{
\R(P_0) \ar[r]^{\R(f)}  \ar[ddr]_{\R(g)\R(f)} & \R(M)\ar[dd]^{\R(g)}\ar[r]&0\\&\\
\R(P'_{0})\ar[r]_{\R(f')} &\R(M')\ar[r]&0
}
\end{displaymath}
and radical projectivity of $P_0$ and surjectivity of $\R(f')$, there exsists an $\R(R)$-semimodule homomorphism $\phi_0:\R(P_0)\rightarrow\R(P'_0)$ such that  $\R(f')\phi_0=\R(g)\R(f)$. Assume inductively that $\phi_i: \R(P_i)\rightarrow\R(P'_i)$ has been defined for all $0\leq i\leq n$ such that $ \phi_{i-1}\R(f_i)=\R(f'_i)\phi_i$ in which $\phi_{-1}=\R(g)$, $f_0=f$ and $f'_0=f'$. Consider the diagram
 \begin{displaymath}
\xymatrix{
\R(P_{n+1}) \ar[r]^{\R(f_{n+1})} \ar[ddr]_{\phi_{n}\R(f_{n+1})}& \R(P_n)\ar[dd]^{\phi_{n}} \ar[r]^{\R(f_{n})}&\R(P_{n-1})   \ar[dd]_{\phi_{n-1}} \\&\\
\R(P'_{n+1}) \ar[r]_{\R(f'_{n+1})} & \R(P'_n) \ar[r]_{\R(f'_{n}) }&\R(P'_{n-1})
}
\end{displaymath}
We see that $\im(\phi_n\R(f_{n+1}))\subseteq\im\R(f'_{n+1})=\ke\R(f'_{n})$ because
\begin{center}
 $\R(f'_n)\phi_n\R(f_{n+1})=\phi_{n-1}\R(f_n)\R(f_{n+1})=0.$
\end{center}
 Also, by considering the diagram
 \begin{displaymath}
\xymatrix{
&\R(P_{n+1}) \ar[d]^{\phi_{n}\R(f_{n+1})} \\
 \R(P'_n) \ar[r]^{\R(f'_{n})\ \ \ \ \ \ \ }&\R(f'_{n+1})(P'_{n+1})\ar[r]&0
}
\end{displaymath}
the radical projectivity of $P_{n+1}$ gives an $\R(R)$-semimodule homomorphism $\phi_{n+1}:\R(P_{n+1})\rightarrow\R(P'_{n+1})$ such that $\R(f'_{n+1})\phi_{n+1}=\phi_n\R(f_{n+1})$ and this completes the proof by induction.  To show that $\phi$ is unique up to homotopy, suppose there is another mapping of complexes $\psi=\{\psi_n\}_{n\geq 0}$ from $\R(\mathcal{P})$ to $\R(\mathcal{P}')$ such that $\R(g)\R(f)=\R(f')\psi_0$. Assuming $t_{-2}=s_{-2}=t_{-1}=s_{-1}=0$, we have \[\R(f')s_{-1}+s_{-2}\R(f_{-1})+\R(g)= \R(f')t_{-1}+t_{-2}\R(f_{-1})+\R(g).\] Suppose  by induction there are $\R(R)$-semimodule homomorphisms $s_i, t_i: \R(P_i)\rightarrow\R(P'_{i+1})$ for $0\leq i\leq n$, such that $$\R(f'_{i+1})s_{i}+s_{i-1}\R(f_{i})+\phi_i= \R(f'_{i+1})t_{i}+t_{i-1}\R(f_{i})+\psi_i  .$$ In particular, by composing $\R(f_{n+1})$  with both sides of this equality when $i=n$, we have
\begin{align*}
\R(f'_{n+1})s_{n}(\R(f_{n+1}))+s_{n-1}\R(f_{n})(\R(f_{n+1}))+\phi_n(\R(f_{n+1}))=\\ \R(f'_{n+1})t_{n}(\R(f_{n+1}))+t_{n-1}\R(f_{n})(\R(f_{n+1}))+\psi_n(\R(f_{n+1})),
\end{align*}
and so we obtain that \[\R(f'_{n+1})(s_{n}\R(f_{n+1}))+(\R(f'_{n+1}))\phi_{n+1}= \R(f'_{n+1})(t_{n}\R(f_{n+1}))+(\R(f'_{n+1}))\psi_{n+1}.\] Therefore $\R(f'_{n+1})(s_{n}\R(f_{n+1})+\phi_{n+1})=\R(f'_{n+1})(t_{n}\R(f_{n+1})+\psi_{n+1}).$
Now, since $\R(f'_{n+1})$ is steady and $P_{n+1}$ is radical projective, by Lemma \ref{pro}, there are $\R(R)$-semimodule homomorphisms $\alpha_{n+1}, \alpha'_{n+1}: \R(P_{n+1})\rightarrow\R(P'_{n+1})$ such that \[s_{n}\R(f_{n+1})+\phi_{n+1}+\alpha_{n+1}=t_{n}\R(f_{n+1})+\psi_{n+1}+\alpha'_{n+1}\ \ (\ast)\] and $\R(f'_{n+1})\alpha_{n+1}=\R(f'_{n+1})\alpha'_{n+1}=0.$ Hence we have  $\alpha_{n+1}(\R(P_{n+1}))\subseteq \ke(\R(f'_{n+1}))=\R(f'_{n+2})(\R(P'_{n+2}))$ and  
 $\alpha'_{n+1}(\R(P_{n+1}))\subseteq \ke(\R(f'_{n+1}))=\R(f'_{n+2})(\R(P'_{n+2})).$
  Consequently, since $P_{n+1}$ is radical projective and 
   $\R(f'_{n+2}):\R(P'_{n+2})\rightarrow\R(f'_{n+2})(\R(P'_{n+2}))$
   is surjective, there are $\R(R)$-homomorphisms $s_{n+1}, t_{n+1}: \R(P_{n+1})\rightarrow\R(P'_{n+2})$ such that $\alpha_{n+1}=\R(f'_{n+2})s_{n+1} $ and $\alpha'_{n+1}=\R(f'_{n+2})t_{n+1}$. Now, by replacing  $\alpha_{n+1}$ and $ \alpha'_{n+1}$ in $(\ast)$, we conclude that  $\R(f'_{n+2})s_{n+1}+s_{n}\R(f_{n+1})+\phi_{n+1}= \R(f'_{n+2})t_{n+1}+t_{n}\R(f_{n+1})+\psi_{n+1}$, which shows $\phi\overset{(s, t)}{\simeq}\psi$.
\end{proof}
We end with the following immediate result of Theorem \ref{exist}.
\begin{cor}
Every $R$-module isomomorphism uniquely lifts up to homotopy to an isomorphism of complexes of projective $\R(R)$-semimodules.
\end{cor}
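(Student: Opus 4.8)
The plan is to deduce this entirely from Theorem \ref{exist} by applying it three times — to the isomorphism, to its inverse, and to the identity — and then reading off the resulting lift as a homotopy equivalence. Fix an $R$-module isomorphism $g:M\to M'$ with inverse $h:=g^{-1}:M'\to M$, and fix radical projective resolutions $\mathcal{P}=(P_n,f_n)_{n\geq 0}$ of $M$ and $\mathcal{P}'=(P'_n,f'_n)_{n\geq 0}$ of $M'$.

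First I would invoke Theorem \ref{exist} twice: it produces a map of complexes $\phi:\R(\mathcal{P})\to\R(\mathcal{P}')$ with $\R(f')\phi_0=\R(g)\R(f)$, unique up to homotopy, and symmetrically a map $\psi:\R(\mathcal{P}')\to\R(\mathcal{P})$ with $\R(f)\psi_0=\R(h)\R(f')$, unique up to homotopy. Next I would check that the composite $\psi\phi:\R(\mathcal{P})\to\R(\mathcal{P})$ is itself a lift of $I_{\R(M)}$ in the sense of Theorem \ref{exist}: using functoriality of $\R$ \cite[Theorem 4]{smr}, $\R(f)(\psi_0\phi_0)=\R(f)\psi_0\phi_0=\R(h)\R(f')\phi_0=\R(h)\R(g)\R(f)=\R(hg)\R(f)=\R(I_M)\R(f)=\R(f)=I_{\R(M)}\R(f)$, so $\psi\phi$ and the identity map $I_{\R(\mathcal{P})}$ are two lifts of $I_{\R(M)}:\R(M)\to\R(M)$ along $\mathcal{P}$; hence by the uniqueness clause of Theorem \ref{exist} we get $\psi\phi\overset{(s,t)}{\simeq}I_{\R(\mathcal{P})}$ for some homotopy pair $(s,t)$. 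Exchanging the roles of $\mathcal{P}$ and $\mathcal{P}'$ yields $\phi\psi\simeq I_{\R(\mathcal{P}')}$ in the same way.

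These two homotopies say exactly that $\phi$ is a homotopy equivalence between the complexes of projective $\R(R)$-semimodules $\R(\mathcal{P})$ and $\R(\mathcal{P}')$, with homotopy inverse $\psi$ — i.e., an isomorphism of such complexes up to homotopy — and the uniqueness of $\phi$ up to homotopy is precisely the uniqueness assertion already contained in Theorem \ref{exist}. The argument is purely formal; the only point needing care is the verification that the composite of two lifts is again a lift, so that the uniqueness part of Theorem \ref{exist} may legitimately be applied to the pair $\psi\phi$ and $I_{\R(\mathcal{P})}$ (respectively $\phi\psi$ and $I_{\R(\mathcal{P}')}$), and this is the short computation with $\R(f)\psi_0\phi_0$ displayed above, relying only on functoriality of $\R$.
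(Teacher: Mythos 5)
Your proposal is correct and follows essentially the same route as the paper: invoke Theorem \ref{exist} for $g$ and $g^{-1}$ to get $\phi$ and $\psi$, then use functoriality of $\R$ (so that $\psi\phi$ and $\phi\psi$ are lifts of the identities) together with the uniqueness-up-to-homotopy clause to conclude $\psi\phi\simeq I_{\R(\mathcal{P})}$ and $\phi\psi\simeq I_{\R(\mathcal{P}')}$. In fact your write-up makes explicit the short computation that the paper's proof leaves implicit.
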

\begin{proof}
Let $\mathcal{P}=(P_n,f_n)_{n\geq 0}$ and $\mathcal{P'}=(P'_n,f'_n)_{n\geq 0}$  be radical projective resolutions of $M$ and $M'$, respectively and  $g: M\rightarrow M'$ is an $R$-module isomomormphis such that the map $\phi:\R(\mathcal{P})\rightarrow\R(\mathcal{P'})$ is lifting of the map $\R(g): \R(M)\rightarrow \R(M')$ to the coresponding radical projective resoultion and $\R(f_i)$ and $\R(f'_i)$ are steady. By Theorem\ref{exist} the diagram\begin{displaymath}
 \xymatrix{
	\cdots\ar[r]&\R(P_n) \ar[r]^{\R(f_n)} \ar[d]_{\phi_n}& \R(P_{n-1})\ar[d]_{\phi_{n-1}} \ar[r]^{\R(f_{n-1})}&\cdots\ar[r]^{\R(f_1)} &\R(P_0)   \ar[d]_{\phi_0}\ar[r]^{\R(f_0)}&\R(M) \ar[d]_{\R(g)}\\\cdots\ar[r]&
	\R(P'_n) \ar[r]_{\R(f'_n)} & \R(P'_{n-1}) \ar[r]_{\R(f'_{n-1})}&\cdots \ar[r]_{\R(f'_1)}&\R(p'_0)\ar[r]_{\R(f'_0)}&\R(M')
}
\end{displaymath}  is commutative. Hence by Theorem \ref{exist} there exists the map $\psi:\R(\mathcal{P'})\rightarrow\R(\mathcal{P})$ such that the diagram \begin{displaymath}
 \xymatrix{
	\cdots\ar[r]&\R(P'_n) \ar[r]^{\R(f'_n)} \ar[d]_{\psi_n}& \R(P'_{n-1})\ar[d]_{\psi_{n-1}} \ar[r]^{\R(f'_{n-1})}&\cdots\ar[r]^{\R(f'_1)} &\R(P'_0)   \ar[d]_{\psi_0}\ar[r]^{\R(f'_0)}&\R(M') \ar[d]_{\R(g^{-1})}\\\cdots\ar[r]&
	\R(P_n) \ar[r]_{\R(f_n)} & \R(P_{n-1}) \ar[r]_{\R(f_{n-1})}&\cdots \ar[r]_{\R(f_1)}&\R(p_0)\ar[r]_{\R(f_0)}&\R(M)
}
\end{displaymath} is commutative. Therefore by  \cite[Theorem 4]{smr} and  Theorem\ref{exist} $\phi \psi\simeq I_{\R(P')}$ and  $\psi \phi \simeq I_{\R(P)}$.
\end{proof}


\begin{thebibliography}{2}
\bibitem{jawad}
J. Abuhlail, \emph{Exact sequences of semimodules over semiring}, arXiv:1210.4566v1, (2012)
\bibitem{flatjawad}
J. Abuhlail and R. G. Noegraha, \emph{Pushouts and e-orojective semimodules}, arXiv: 1904. 01549v2, (2019). 
\bibitem{Alkan}
M. Alkan and Y. Tiras, \emph{On prime submodules},  Rocky Mt. J. Math., 37(3)(2007), 709–722.
\bibitem{Anderson}
 D. D. Anderson, \emph{Cancellation modules and related modules},  Lect. Notes Pure Appl. Math.,  220 (2001), 13–25.
\bibitem{Ati}
M. F. Atiyah and I. G. Macdonald, \emph{Introduction to commutative algebra}, Addison-Wesley, London, (1969).

\bibitem{Golan}
 J. Golan, \emph{Semirings and their applications}, Kluwer Academic Publishers, Dordrecht, (1999).
 \bibitem{lu}
C. P. Lu, \emph{A module whose prime spectrum has the surjective natural map}, Houst. J. Math., 33(1)(2007), 125-143.

\bibitem{ms}
R.L. McCasland and P.F. Smith, \emph{ Zariski spaces of modules over arbitrary rings}, Commun. Algebra, 34(2006), 3961–3973.
\bibitem{patch} 
A. Patchkoria, \emph{Extensions of semimodules and the takahashi functor
$Ext_{\Lambda}(C;A)$}, Homol. Homotopy Appl., 5(1)(2003), 387-406.
\bibitem{smr}
M. Safaeipour, H. F. Moghimi and F. Rashedi, \emph{From modules to semimodules of radical submodules: Exact sequences}, Submitted.
\bibitem{smr1}
M. Safaeipour, H. F. Moghimi and F. Rashedi, \emph{Tensor product of semimodule of radical submodules and exact sequences}, to appear in Arab. J. Math.
\end{thebibliography}
\end{document}